\documentclass[reqno]{amsart}
\usepackage{amssymb,latexsym}
\usepackage[utf8]{inputenc}
\usepackage[english]{babel}
\usepackage{amsmath,amsthm}
\usepackage{enumerate}
\usepackage{enumitem}
\usepackage{graphicx}
\usepackage{mathtools}
\usepackage{calc}

\DeclarePairedDelimiter{\floor}{\lfloor}{\rfloor}
\usepackage{pgf,tikz}
\usetikzlibrary{arrows}
\usepackage{pgfplots}
\usepackage[hyperfootnotes=false, colorlinks, linkcolor={blue}, citecolor={magenta}, filecolor={blue}, urlcolor={blue}]{hyperref}

\usepackage{pgffor}
\usepackage{tikz-cd}
\usepackage{booktabs}

\usepackage{enumerate}

\usepackage{siunitx}

\sisetup{
	detect-weight,
	mode = text
}
\newcommand*{\specialcell}[2][b]{%
	\begin{tabular}[#1]{@{}c@{}}#2\end{tabular}%
}
\newcommand*{\specialcellbold}[2][b]{%
	\bfseries
	\sisetup{text-rm = \bfseries}%
	\begin{tabular}[#1]{@{}c@{}}#2\end{tabular}%
}

\makeatletter
\def\paragraph{\@startsection{paragraph}{4}%
	\z@\z@{-\fontdimen2\font}%
	{\normalfont\bfseries}}
\makeatother
%
%


%

%
%

\theoremstyle{plain}
\newtheorem{theorem}{Theorem}
\newtheorem{corollary}{Corollary}

\newtheorem{lemma}{Lemma}

\newtheorem{Remark}{Remark}

\theoremstyle{definition}
\newtheorem{definition}{Definition}

\theoremstyle{remark}

\numberwithin{equation}{section}

\DeclareMathOperator{\End}{End}

\newcommand{\RR}{\mathbb{R}}
\newcommand{\ZZ}{\mathbb{Z}}
\newcommand{\FF}{\mathbb{F}}
\newcommand{\Or}[1]{\mathcal{O}r(#1)}

\begin{document} %
\title{Orchards in elliptic curves over finite  fields}

\author{R. Padmanabhan}
\address{R. Padmanabhan\\
   Department of Mathematics\\
   University of Manitoba\\
   Winnipeg, Manitoba  R3T 2N2, 
   Canada}
\email{padman@cc.umanitoba.ca}

\author {Alok Shukla }
\address{Alok Shukla\\
    Mathematical and Physical Sciences division\\
    School of Arts and Sciences \\
    Ahmedabad University \\
    Central Campus, Navrangpura, Ahmedabad 380009, India}
\email{Alok.Shukla@ahduni.edu.in}

\keywords{Point-line arrangements, Orchard problem, Elliptic curves, Group law, Application of finite fields.}

\subjclass{Primary: 52C30, 14Q05}


\begin{abstract}
	Consider a set of $ n $ points on a plane.	A line containing exactly $ 3 $ out of the $ n $ points is called a $ 3 $-rich line. 
	The classical orchard problem asks for a configuration of the $ n $ points on the plane that maximizes the number of $ 3 $-rich lines. In this note, using the group law in elliptic curves over finite fields, we exhibit several (infinitely many) group models for orchards wherein the number of $ 3 $-rich lines agrees with the expected number given by Green-Tao (or, Burr, Gr\"unbaum and Sloane) formula for the maximum number of lines. We also show, using elliptic curves over finite fields, that there exist infinitely many point-line configurations with the number of $ 3 $-rich lines exceeding the expected number given by Green-Tao formula by two, and this is the only other optimal possibility besides the case when the number of $ 3 $-rich lines agrees with the Green-Tao formula. 
\end{abstract}


\maketitle

\section{Introduction} \label{Sec:Intro}
\textit{Orchard problem} has a rich history going back to 1821. Jackson posed this problem in a poetic style in his book, aptly entitled, ``Rational Amusement for Winter Evenings \ldots'' \cite{jackson1821rational}. This problem was also posed by Sylvester in 1868 \cite{sylvester1868mathematical}. \textit{The orchard problem} is to plant $ n $ trees in an orchard maximizing the number of possible rows which contain exactly three trees. Of course, trees can be considered as points on a plane and rows as lines. We say that a line is $ 3 $-rich if it contains exactly three points out of the $ n $ specified points. A $ (n,t) $-arrangement is a set of $ n $ points and $ t $ $ 3 $-rich lines in the real projective plane. The classical orchard problem is to find an arrangement in the real projective plane with greatest number of $ 3 $-rich lines $ t $ for each given value of $ n $. We say that $ (n,t) $ is optimal and $ \Or{n} =t $, if $ t $ is the solution of the orchard problem for $ n $ points. Clearly, for $ 3 $ (also, $ 4 $) points there is only one $ 3 $-rich line possible, so arrangement $ (3,1) $ (also, $ (4,1) $) is optimal, and $ \Or{3} = \Or{4} =1 $. Fig.~\ref{Fig:ExampleOrchard} shows optimal arrangements $ (5,2),(6,4) $, $ (7, 6)$ and $ (8,7) $.
	
	Sylvester proved that $ \Or{n} \geq \floor{\frac{1}{6}(n-1)(n-2)} $, where, as noted earlier, $ \Or{n} $ is the maximum number of $ 3 $-rich lines for $ n $ points.  \textup{(}Here, and in what follows, $\floor{x} $ denotes the integer part of $x$.\textup{)} Burr, Gr\"unbaum and Sloane \cite{MR337659}, discussed this problem in a paper in $ 1973 $, and proved that 
	\begin{equation}\label{Eq:Burr}
	\Or{n} \geq \floor {n (n-3)/6} + 1.
	\end{equation} 
\begin{figure}[ht]
	\includegraphics[]{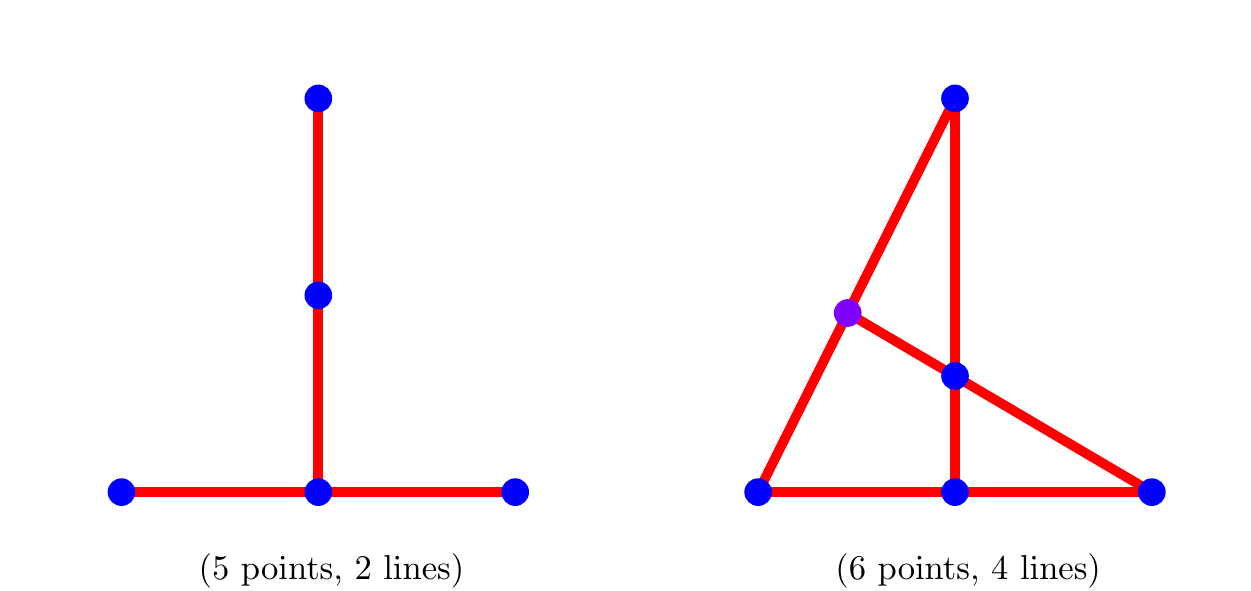}
	\includegraphics[]{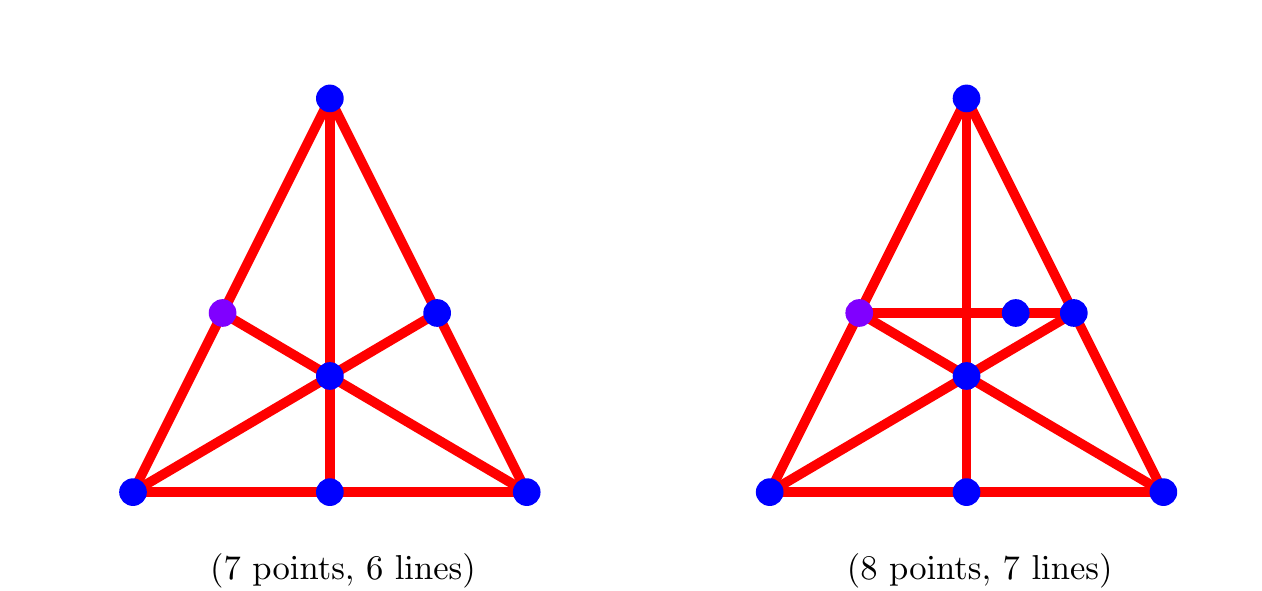}
	\caption{Examples of optimal $ (p,t) $ arrangements for the orchard problem for  $ 5 \leq p \leq 8 $.}\label{Fig:ExampleOrchard}
\end{figure}

More recently, in 2013 Green and Tao solved the orchard problem for all but finitely many cases. More precisely, they have solved the orchard problem for such cases wherein the number of points $ n $ is greater than some sufficiently large constant. 
\begin{theorem}[Green-Tao, \cite{green2013sets}]\label{Theorem:GreenTao}
	Suppose that $P$ is a finite set of $n$ points in the plane. Suppose that $n \geq n_0$ for some sufficiently large absolute constant $n_0$. Then there are no more than $\lfloor n (n-3)/6\rfloor + 1$ lines that are \emph{$3$-rich}, that is they contain precisely $3$ points of $P$.  
\end{theorem}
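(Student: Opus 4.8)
The plan is to prove the bound by a contradiction that forces strong algebraic structure. Write $t_k$ for the number of lines meeting $P$ in exactly $k$ points, and suppose for contradiction that the number of $3$-rich lines exceeds $\floor{n(n-3)/6}+1$. The exact pair count $\binom{n}{2}=\sum_{k}\binom{k}{2}t_k$ then forces the number of \emph{ordinary} (that is, $2$-rich) lines to be tiny: since $3t_3$ already accounts for essentially all of the $\binom{n}{2}$ pairs, one gets $\sum_{k\neq 3}\binom{k}{2}t_k \le n-6$, so in particular $t_2=O(n)$. Thus I have reduced matters to configurations determining only $O(n)$ ordinary lines, and the whole game becomes understanding how such configurations can look.

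The main structural input, and the step I expect to be the real obstacle, is the assertion that a finite planar set determining only $O(n)$ ordinary lines must, after deleting $O(1)$ exceptional points, lie on a cubic curve --- or on one of the explicitly degenerate loci, a conic together with a line, or a bounded union of lines. I would prove this with the polynomial method reinforced by the Cayley--Bacharach theorem: the incidence data produces a low-degree polynomial vanishing on $P$, and an inductive peeling argument identifies the component carrying all but $O(1)$ of the points as a cubic, while controlling the exceptional set. Making the heuristic ``few ordinary lines $\Rightarrow$ cubic'' rigorous in this robust, error-tolerant form --- rather than in the classical exact form where every connecting line is assumed $3$-rich --- is the technical heart of the argument.

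Once $P$ is known to sit on an irreducible cubic $E$, I would exploit the group law: fixing a flex as the identity, three points are collinear precisely when their sum vanishes in the Lie group $E(\RR)$, which is isomorphic to $\RR/\ZZ$ or $\RR/\ZZ\times\ZZ/2\ZZ$ (with the evident singular analogues). Counting $3$-rich lines becomes counting unordered triples $\{x,y,z\}$ of \emph{distinct} elements of the $n$-point set $S=P$ with $x+y+z=0$, the repeated-coordinate (tangential) solutions corresponding to $2$-rich lines and hence being excluded. When $S$ is a finite subgroup the ordered count is exactly $n^2$, and inclusion--exclusion removes $3n-2\lvert S[3]\rvert$ degenerate ordered solutions, leaving $(n^2-3n+2\lvert S[3]\rvert)/6$ triples; the presence of full $3$-torsion $\lvert S[3]\rvert=3$ then yields precisely $\frac{n^2-3n}{6}+1=\floor{n(n-3)/6}+1$, which is exactly where the additive constant $+1$ originates. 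The remaining point is the extremal additive statement that no $n$-element subset of a one-dimensional group can beat this subgroup count, which I would settle by a Fourier or second-moment estimate on the number of additive triples.

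Finally I would dispatch the degenerate possibilities and the bookkeeping. For a conic-plus-line or a bounded union of lines, a direct count shows the number of $3$-rich lines is smaller by a positive proportion once $n$ is large, so these cannot be optimal. Re-inserting the $O(1)$ deleted points adds only $O(n)$ further $3$-rich lines, which is dominated by the gap between the quadratic main term and the claimed bound. This last domination is exactly why the hypothesis $n\ge n_0$ is needed: it guarantees that every linear error term incurred along the way falls below the quadratic count coming from the cubic, closing the contradiction.
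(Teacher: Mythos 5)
First, note that the paper does not prove Theorem~\ref{Theorem:GreenTao} at all: it is quoted verbatim from Green and Tao \cite{green2013sets} and used as a black box, so there is no in-paper argument to measure your attempt against. What you have written is a recognisable thumbnail of Green and Tao's actual strategy (few ordinary lines, then a structure theorem placing the points on a cubic up to $O(1)$ exceptions, then the group law and additive counting), and your opening reduction is fine: if $t_3\ge\floor{n(n-3)/6}+2$ then $\sum_{k\ne 3}\binom{k}{2}t_k\le n-4$, so the set spans only $O(n)$ ordinary lines. But as a proof the proposal has two genuine gaps. The first you half-acknowledge: the structure theorem for sets with $O(n)$ ordinary lines is the entire content of \cite{green2013sets} --- dozens of pages combining Cayley--Bacharach/Chasles machinery with a delicate analysis of triangular grids and near-group structure --- and invoking ``the polynomial method reinforced by Cayley--Bacharach'' describes their toolbox rather than supplying an argument.

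The second gap is a step you misjudge as routine. The ``extremal additive statement'' cannot be settled by a Fourier or second-moment estimate, because such estimates control the number of additive triples only up to errors of size $O(n)$, whereas the theorem is about the exact linear term and the constant $+1$. Concretely, for an $n$-set $S$ in the group of the cubic the ordered count of solutions of $x+y+z=0$ is trivially at most $n^2$ (since $z$ is determined by $x$ and $y$), and inclusion--exclusion over repeated coordinates gives at most $\frac{1}{6}\left(n^2-3A+2B\right)$ unordered distinct triples, where $A=\#\{x\in S:-2x\in S\}$ and $B\le 3$ counts $3$-torsion in $S$. This exceeds your target bound by roughly $(n-A)/2$, so you must additionally prove $A\ge n-O(1)$, i.e.\ that for all but boundedly many $x\in S$ the tangent line at $x$ meets the cubic again inside $S$. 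That is a nontrivial structural fact about near-extremal sets (it is precisely where Green and Tao show $S$ is essentially a coset of a finite subgroup), not a consequence of a variance bound. Until both ingredients are supplied, the proposal is an outline of the known proof rather than a proof.
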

In conjunction with \eqref{Eq:Burr}, Green-Tao theorem shows that for all $n \geq n_0$,  
\begin{equation} \label{Eq:Green-Tao-Bound}
\Or {n} = \floor {n (n-3)/6} + 1.
\end{equation}

For brevity, we will call the bound given by Eq.~\eqref{Eq:Green-Tao-Bound}, ``Green-Tao" bound in the sequel. Although, it should more appropriately be addressed as ``Burr, Gr\"unbaum, Sloane, Green, Tao" bound owing to the result of Burr, Gr\"unbaum and Sloane given in Eq.~\eqref{Eq:Burr}.

Since, every line cuts a cubic in three points, and four points on a cubic will never be collinear, cubic curves defined over real or complex numbers provide a natural tool to construct orchards. This has been done before, for example, by Burr, Gr\"unbaum and Sloane \cite{MR337659} and Green-Tao \cite{green2013sets} in obtaining their results described earlier. 

In this note, we give analogs of the classical orchard problem, by changing the `ground plane' of orchards from a real projective plane to a projective plane defined over a finite field.\\

\noindent \textbf{Orchard problem over finite fields:} Consider a set of  $ N $ points in a projective plane defined over a finite field. Find an optimal configuration of the given $ N $ points, such that the number of $ 3 $-rich lines is maximum. \\

\noindent \textbf{Notation:} In the sequel, by a slight abuse of notation, the number of $ 3 $-rich lines in an optimal configuration for the orchard problem over finite fields will also be denoted by $ \Or{N} $. The finite field with $ q $ elements will be denoted by $ \FF_q $, where $ q = p^n $ for some prime $ p $ and some positive integer $ n $. For an elliptic curve $ E $ given by a Weierstrass equation $ \displaystyle y^2 + a_1xy +a_3y = x^3 +a_2x^2 +a_4x +a_6 $ defined over a field $ K $, $ E(K) $ will denote the set $ \displaystyle \{(x,y) \in K^2 \ | \ y^2 + a_1xy +a_3y = x^3 +a_2x^2 +a_4x +a_6    \} \cup \{O\} $. \\

Certain elliptic curves defined over finite fields will be used to obtain concrete group models for the orchards $ (N,t) $.
In these group models, three points $ P,Q,R $ form a $ 3 $-rich line, if and only if, $ P+Q+R = O $ in the underlying group structure. This enables one to count the number of lines and we give infinite families of examples using both ordinary and supersingular elliptic curves over finite fields wherein the Green-Tao bounds (see \eqref{Eq:Green-Tao-Bound}) are attained. Also, we show that there is just one other possibility, which also occurs infinitely many times, that the Green-Tao bounds are exceeded by two $ 3 $-rich lines. 

Next we consider a numerical example to illustrate how the orchard problem can be treated on an elliptic curve over a finite field.

\begin{description}
	\item[Example (6 points, 4 lines)]  Consider the elliptic curve $ E $ given by a minimal Weierstrass equation $ y^2 = x^3 + 3 $  over $ \FF_5 $. It can be verified that 
$ \#E(\FF_5) = 6 $ and $ E(\FF_5) $ contains the following points:
\begin{align*}
 A &= (1, 2), \quad B = (1, 8),\quad C = (2, 1), \\
 D &= (2, 4), \quad E = (3, 0), \quad  O = (0,1,0)  \text{ (the point at infinity).}
\end{align*}

 The following four $ 3 $-rich lines could be formed using the points in $ E(\FF_5) $ (see Fig.~\ref{Fig:6points4lines}): 
 \begin{align*}
 \l_1: \quad   \text{containing} \quad  \{O, A, B\},\\
 \l_2: \quad   \text{containing} \quad  \{O, C, D\},\\
 \l_3: \quad   \text{containing} \quad  \{A, C, E\},\\
 \l_4: \quad   \text{containing} \quad  \{B, D, E\}.
 \end{align*}
 It can be verified that $ E(\FF_5) \cong \ZZ_6 $ and $ A = (1,2) $ is a generator of the group. It can be verified that all the points above lie on the elliptic curve 
 $ y^2 + 5xy -15 y = x^3 -10 x^2 +35x -42 $, which is $  y^2 = x^3 + 3 \mod 5 $. 
 
\begin{figure}
	\begin{center}
		\includegraphics[scale=0.7]{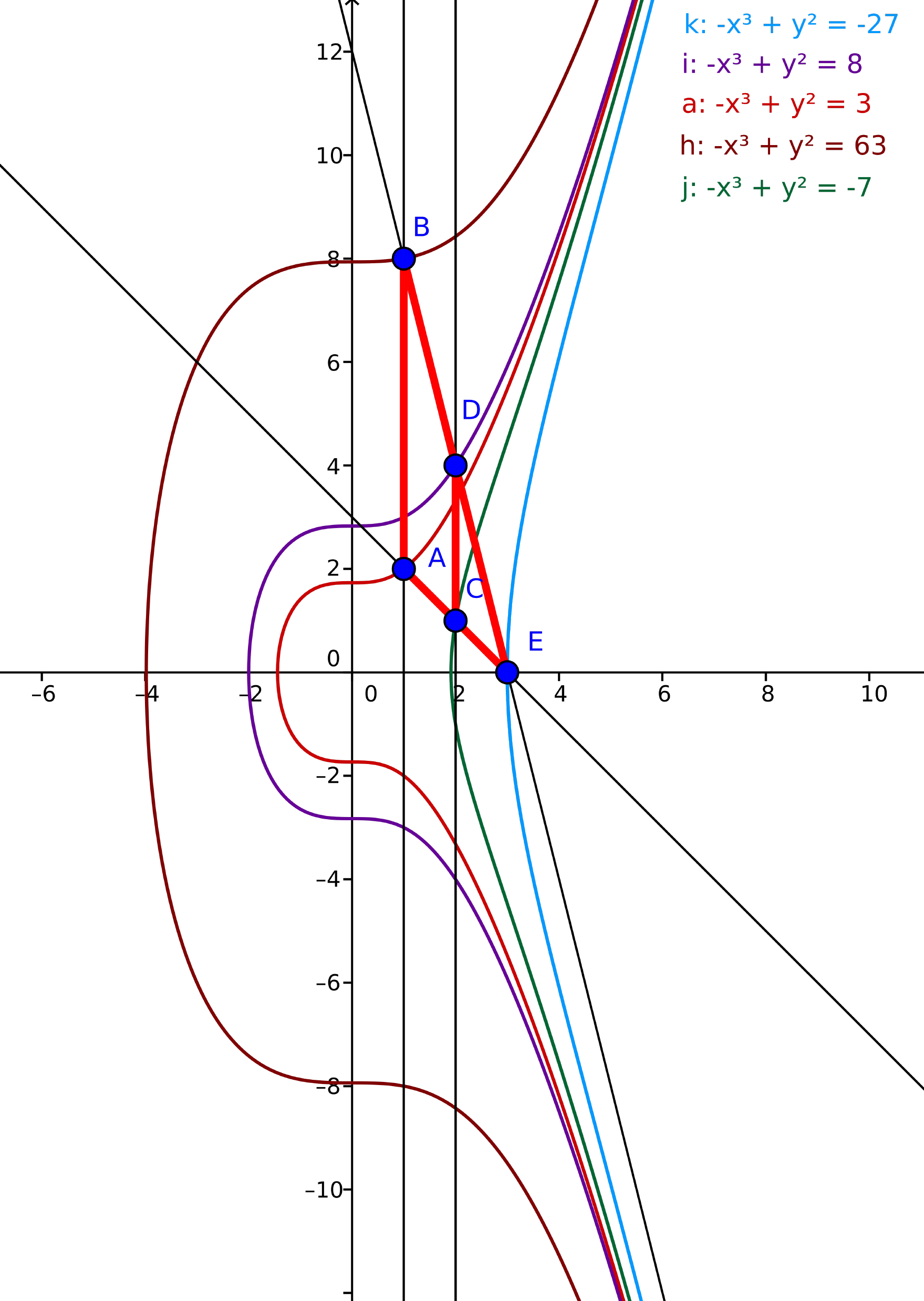}
	\end{center}
	\caption{Orchard configuration with (6 points, 4 lines) using elliptic curve given by a minimal Weierstrass equation $ y^2 = x^3 + 3 $  over $ \FF_5 $. Note that all the five elliptic curves shown in the figure reduce to $ y^2 = x^3 + 3 $ modulo $ 5 $. }\label{Fig:6points4lines}
\end{figure}
\end{description}

Now we give a brief organization of the paper. In the next section, i.e. Sect.~\ref{Sect:KnownResults}, some important results on elliptic curves over finite fields will be reviewed. 
In Sect.~\ref{Sect:main} our main results are proved. Finally, in Sect.~\ref{Sect:Examples} we discuss some numerical examples and the question of realizability of our solutions on a real projective plane.

\section{Elliptic curves over finite fields} \label{Sect:KnownResults}
 The main purpose of this section is to collect some known structural results on elliptic curves over finite fields for later uses. Readers are referred to standard textbooks, such as \cite{MR2514094} or \cite{MR2404461}, for details on the material related to this section.

An \textit{elliptic curve} over a field $ K $ is a smooth cubic curve of genus one with a specified basepoint $ O \in E(K) $. It is a consequence of Riemann-Roch theorem that every elliptic curve $ E/K $ can be given by an explicit Weierstrass equation of the form $ \displaystyle y^2 + a_1xy +a_3y = x^3 +a_2x^2 +a_4x +a_6 $, with $ a_i \in K $, and the extra point $ [0,1,0] $ at infinity. If characteristic of $ K $ is not $ 2 $ or $ 3 $ one can always write equation of an elliptic curve in the following minimal Weierstrass form
\begin{equation}
E: y^2 = x^3 + Ax +B,
\end{equation}
with the discriminant $ \Delta = -16 (4A^3 + 27 B^2)  \neq 0 $.  Here the condition $ \Delta \neq 0  $ ensures that the elliptic curve is non-singular with no-repeated roots (equivalently, the partial derivatives do not vanish simultaneously for any point on $E$), which geometrically means the curve has no cusps or nodes (self-intersections). 
For an elliptic curve $ E $ there is an important invariant, called $ j $-invariant which is defined as
$ \displaystyle j(E) =  1728 \dfrac{4A^3}{4 A^3 + 27 B^2}$. Two elliptic curves defined over a field $ K $ and having the same $ j $-invariant are isomorphic over $ \bar{K} $, where $ \bar{K} $ is the algebraic closure of the ground field $ K $. 
An elliptic curve is equipped with an abelian group structure and the group law is such that three points $ P,Q $  and $ R $ in $ E(K) $ are collinear if and only if $ P + Q + R = O $, with $ O $ being the identity element of the group, i.e., the point at infinity $ (0,1,0) $. \\

An \emph{isogeny } between $E_1/K$ and $E_2/K$ is a non-constant morphism $\phi \colon E_1 \to E_2$ defined over $K$ such that $\phi(O_{E_1})= O_{E_2}$. 
(i.e., $ \phi $ is a morphism of curves given by rational functions with coefficients in $K$, which respects identity.)
Elliptic curves $E_1$ and $E_2$ are called isogenous if there exists an isogeny $\phi \colon E_1 \to E_2$. Endomorphism ring of $ E/K $, $ \End_{K}(E) $ (or $ \End(E) $, to ease notation) consists of all the isogenies $ \phi \colon E \to E $, with natural addition and multiplication given by composition. 
For any integer $ m $, there is a natural definition of an isogeny \emph{multiplicaiton by $ m $} given as  $ [m]: E \to E $, with $ [m]P = mP$. The kernel of the isogeny $ [m] $ is denoted by 
$ E[m] $ (i.e., $ E[m] = \{ P \in E  \ | \ m P = 0\} $), and it is called the \emph{$ m $-torsion subgroup} of $ E $. 

Mordell proved that the group of rational points  $ E(Q) $ is finitely generated. Moreover, over a finite field $ E(\FF_q)$ is a finite abelian group of order $ \# E(\FF_q) $, where a bound for $ \# E(\FF_q) $ is given by Hasse's theorem.
\begin{theorem}[Hasse]
 
\begin{equation}
	\text{Let }\, \# E(\FF_q) = q+1 -t. \, \text{Then }  |t|  \leq 2 \sqrt{q}.
\end{equation}
\end{theorem}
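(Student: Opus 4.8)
The plan is to follow the classical approach via the Frobenius endomorphism, which reduces the estimate to a Cauchy--Schwarz inequality for a positive definite quadratic form on the endomorphism ring. First I would introduce the $q$-power Frobenius endomorphism $\phi \colon E \to E$, given on coordinates by $(x,y) \mapsto (x^q, y^q)$, which is an isogeny fixing $O$. The crucial observation is that a point $P \in E(\bar{\FF}_q)$ lies in $E(\FF_q)$ precisely when its coordinates are fixed by the $q$-power map, i.e.\ when $\phi(P) = P$. Hence $E(\FF_q)$ is exactly the kernel of the isogeny $\phi - 1$, so that $\# E(\FF_q) = \# \ker(\phi - 1)$.

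Next I would show that $\phi - 1$ is separable, so that $\# \ker(\phi-1) = \deg(\phi - 1)$. This follows from the fact that the Frobenius $\phi$ is purely inseparable with vanishing derivative, whence $\phi - 1$ has derivative $-1 \neq 0$ and is therefore separable. With $t$ defined by $\# E(\FF_q) = q + 1 - t$, this produces the key identity $\deg(\phi - 1) = q + 1 - t$.

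The heart of the argument is the structural fact that the degree map $\deg \colon \End(E) \to \ZZ$ is a positive definite quadratic form: one has $\deg([m]\psi) = m^2 \deg(\psi)$, and the associated symmetric pairing $\langle \psi, \rho\rangle = \deg(\psi + \rho) - \deg(\psi) - \deg(\rho)$ is $\ZZ$-bilinear, with $\deg(\psi) \geq 0$ and $\deg(\psi)=0$ only for $\psi = 0$. Granting this, I would expand $\deg(\phi - 1) = \deg(\phi) + \deg(1) - \langle \phi, 1\rangle$; since $\deg(\phi) = q$ and $\deg(1) = 1$, comparison with the identity of the previous paragraph yields $t = \langle \phi, 1\rangle$. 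Finally, the Cauchy--Schwarz inequality for the positive definite form gives $|\langle \phi, 1\rangle| \leq 2\sqrt{\deg(\phi)\,\deg(1)} = 2\sqrt{q}$, which is exactly $|t| \leq 2\sqrt{q}$.

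The main obstacle is establishing the positive-definite-quadratic-form property of the degree map; this is the substantive analytic input, and it rests on the theory of the dual isogeny (with $\hat{\psi}\,\psi = [\deg \psi]$) to verify bilinearity of $\langle \cdot,\cdot\rangle$, together with the non-negativity and nondegeneracy of $\deg$. Once that fact is in hand, the remaining steps --- identifying $\# E(\FF_q)$ with $\deg(\phi-1)$ and applying Cauchy--Schwarz --- are essentially formal.
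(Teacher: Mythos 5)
The paper states Hasse's theorem without proof, citing standard references (Silverman, Washington) for this background material, and your argument is precisely the classical proof given there (e.g.\ Silverman, Thm.~V.1.1). Your sketch is correct: the identification $\#E(\FF_q)=\deg(\phi-1)$ via separability of $\phi-1$ (more precisely, via its nonzero action on the invariant differential), the reduction of $t$ to the pairing $\langle\phi,1\rangle$ attached to the degree quadratic form, and the Cauchy--Schwarz bound $|\langle\phi,1\rangle|\le 2\sqrt{\deg\phi\cdot\deg 1}=2\sqrt{q}$ all go through, with the positive definiteness and bilinearity of the degree form (established via the dual isogeny) being, as you note, the one substantive input.
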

Next, we note a formula to determine the number $ \# E(\FF_q) $. 
\begin{equation}
\# E(\FF_q) =  q + 1 +\sum_{{x \in \FF_q}}  \left(\frac{x^3 + Ax + B}{\FF_q} \right),
\end{equation}
where $\left( \frac{x}{\FF_q}\right) $ is the generalized Legendre symbol which takes the value $ 1 $ if $ x $ is a square in $ \FF_q^{\times} $, and $ -1 $ if $ x $ is not a square in $ \FF_q^{\times} $, otherwise, it takes the value $ 0 $ if $ x=0 $.

\begin{theorem}[Deuring, \cite{MR5125}] \label{Theorem:Deuring}
	Let $ E $ be an elliptic curve over the finite field $ \FF_q $. For each integer $ m \geq 1 $, let $ \phi_m \colon E \to  E^{(p^m)}  $ and $ \hat{\phi}_{m} \colon E^{(p^m)} \to E $
be the $ p^m $-Frobenius map and its dual respectively. Then the following are equivalent.
\begin{enumerate}[]
	\item $ E[p^m] = 0 $ for one(all) $ m \geq 1 $. 
	\item $ \End(E) $ is an order in a quaternion algebra.
	\item $ \hat{\phi}_{m} $ is (purely) inseparable for one (all) $ m \geq 1 $.
	\item The map $ [p] \colon E \to E  $  is purely inseparable and $ j(E) \in \FF_{p^2} $.
\end{enumerate} 	
\end{theorem}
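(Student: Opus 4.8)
The plan is to route all four equivalences through the factorization of the multiplication map into Frobenius and its dual, and to settle the two genuinely arithmetic assertions (the quaternionic endomorphism ring and the field of definition of $j$) separately at the end. I would use freely the standard facts that every isogeny $\psi$ satisfies $\deg\psi = \deg_s\psi \cdot \deg_i\psi$ with $\#\ker\psi = \deg_s\psi$, that $\deg[m] = m^2$, and that each $p^m$-Frobenius $\phi_m$ is purely inseparable of degree $p^m$.

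First I would prove the equivalence of (1) and (3). Since $[p^m] = \hat{\phi}_m\circ\phi_m$, comparing degrees gives $\deg\hat{\phi}_m = p^m$, and because $\phi_m$ is purely inseparable the separable part of $[p^m]$ equals that of $\hat{\phi}_m$. Hence
\begin{equation*}
\#E[p^m] = \#\ker[p^m] = \deg_s[p^m] = \deg_s\hat{\phi}_m,
\end{equation*}
so $E[p^m]=0$ holds exactly when $\hat{\phi}_m$ is purely inseparable, which is (3). The same count shows $E[p^m]$ is either trivial or cyclic of order $p^m$; since $E[p^m]\subseteq E[p^{m+1}]$, the two alternatives cannot mix as $m$ varies, which yields the ``one (all)'' clauses. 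Taking $m=1$, the map $[p]=\hat{\phi}_1\circ\phi_1$ is purely inseparable precisely when $\hat{\phi}_1$ is, i.e. under (3); this is the first half of (4), while the claim $j(E)\in\FF_{p^2}$ I treat last.

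Next, for the equivalence with (2), I would observe that for any prime $\ell\neq p$ the ring $\End(E)$ embeds into $\End(T_\ell E)\cong M_2(\ZZ_\ell)$, so $\End(E)$ is a torsion-free $\ZZ$-algebra of rank at most $4$ carrying the anti-involution $\psi\mapsto\hat\psi$ whose norm $\psi\mapsto\deg\psi$ is positive definite; by the classification of such rings $\End(E)\otimes\QQ$ is imaginary quadratic or a definite quaternion algebra over $\QQ$. To separate the cases I would analyze the action at $p$: when $E[p]\neq 0$ the $p$-adic Tate module has $\ZZ_p$-rank one, the action of $\End(E)$ on it is commutative, and $\End(E)\otimes\QQ$ is forced to be the imaginary quadratic field $\QQ(\phi_n)$ generated by the $q$-power Frobenius endomorphism (with $q=p^n$); when $E[p]=0$ the local algebra $\End(E)\otimes\QQ_p$ is a division algebra, which together with splitting at every $\ell\neq p$ and ramification at $\infty$ forces the rank up to exactly $4$, i.e. an order in a quaternion algebra. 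Combined with (1)$\Leftrightarrow$(3), this gives (1)$\Leftrightarrow$(2).

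It remains to deduce $j(E)\in\FF_{p^2}$ in the supersingular case. Here $\hat{\phi}_1\colon E^{(p)}\to E$ is purely inseparable of degree $p$, and any purely inseparable isogeny of degree $p$ factors as an isomorphism composed with the $p$-power Frobenius of its source; applied to $\hat{\phi}_1$ this produces an isomorphism $(E^{(p)})^{(p)}=E^{(p^2)}\xrightarrow{\sim}E$. Therefore $j(E)=j(E^{(p^2)})=j(E)^{p^2}$, so $j(E)\in\FF_{p^2}$. The main obstacle is the rank-$4$ conclusion in the supersingular branch: ruling out the imaginary-quadratic possibility and exhibiting a full quaternion order genuinely uses the vanishing of $E[p]$ (equivalently the inseparability of $[p]$) through the local analysis at $p$, rather than any formal bookkeeping of degrees, and is where positive-definiteness of the degree form and the failure of any quadratic field to split the algebra at $p$ must be brought in.
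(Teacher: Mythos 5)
The paper does not actually prove this statement: it is quoted as a classical theorem of Deuring (the reference is \cite{MR5125}; the statement and its standard proof are Theorem V.3.1 of Silverman \cite{MR2514094}), and the paper only uses it as the definition of supersingularity. So your attempt can only be measured against that standard proof, which is indeed the route you chose: factor $[p^m]=\hat{\phi}_m\circ\phi_m$, compare separable degrees to get $(1)\Leftrightarrow(3)$ and the first half of $(4)$, use the action on Tate modules for $(2)$, and factor the purely inseparable $\hat{\phi}_1$ through Frobenius to get $j(E)=j(E)^{p^2}$. Those parts of your outline are correct, including the ``one (all)'' bookkeeping and the $j$-invariant computation.

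There is, however, a genuine gap exactly at the point you flag as ``the main obstacle'': the implication $E[p]=0\Rightarrow\operatorname{rank}\End(E)=4$. Your stated justification --- that $\End(E)\otimes\QQ_p$ is a division algebra, plus ``the failure of any quadratic field to split the algebra at $p$'' --- does not close the case. An order in an imaginary quadratic field $K$ in which $p$ is inert or ramified is perfectly consistent with everything you have established: it is commutative, it embeds into the quaternion algebra ramified at $p$ and $\infty$, and $K\otimes\QQ_p$ is itself a division algebra (a field). Nothing in the degree form or the local splitting behaviour excludes this possibility; one must produce a second, non-commuting endomorphism. The standard way (Deuring's, reproduced in Silverman V.3.1) is to suppose $\End(E)\otimes\QQ=K$ is at most quadratic, choose a prime $\ell\neq p$ that is not split in $K$, and consider the quotients $E/C$ over all cyclic subgroups $C\subseteq E[\ell^i]$. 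All of these are supersingular, and --- using precisely the fact $j(E')\in\FF_{p^2}$ for supersingular $E'$, which you prove at the end --- there are only finitely many supersingular $j$-invariants, so two distinct quotients are isomorphic; composing the resulting isogenies yields an endomorphism whose degree and kernel structure contradict the choice of $\ell$. Without this (or an equivalent argument via Dieudonn\'e modules or Deuring lifting), the quaternionic conclusion in $(2)$ is asserted rather than proved. A secondary caveat: as in the paper's own statement, $(2)$ is only true if $\End(E)$ means endomorphisms over $\overline{\FF}_p$; over $\FF_p$ a supersingular curve typically has commutative $\End_{\FF_p}(E)$, so your Tate-module argument should be run over the algebraic closure.
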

\begin{definition}
	An elliptic curve $ E/ \FF_q $ is called supersingular if it satisfies any of the equivalent conditions given in Theorem~\ref{Theorem:Deuring}. Otherwise, it is called an ordinary elliptic curve.
\end{definition}
Another characterization of supersingular curves is that a curve is supersingular if and only if $ t \equiv 0 \mod p $, where $\# E(\FF_q) = q+1 -t$.

The next result describes all the possible values of $ \#E(q^n) $ that one can get by varying $ E$ over all the elliptic curves over $ \FF_q $.
\begin{theorem}[Schoof, \cite{MR914657}] \label{Theorem:Schoof}
	Let $ q =p^n $. There exists an elliptic curve $ E/\FF_q $ of order $ q+1 -t $ if and only if one of the following condition holds:
	\begin{enumerate}
		\item $ t \not \equiv 0 \mod p $, and $ t^2 \leq 4q $.
		\item $ n $ is odd and one of the following holds:
		\begin{itemize}
			\item $ t= 0  $.
			\item $ t^2 = 2 q $, and $ p=2 $.
			\item $ t^2 = 3 q $, and $ p=3 $.
		\end{itemize}
		\item $ n $ is even and one of the following holds:
	\begin{itemize}
		\item $ t^2 = 4q $.
		\item  $ t^2 =q  $ and $ p \not \equiv 1 \mod 3 $.
		\item $ t=0 $, and $ p \equiv 1 \mod 4 $.
	\end{itemize}
	\end{enumerate}
	
\end{theorem}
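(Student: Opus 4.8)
The plan is to prove both implications through a careful study of the $q$-power Frobenius endomorphism $\pi \colon E \to E$, whose characteristic polynomial is $x^2 - t x + q$ and which satisfies $\#E(\FF_q) = \deg(\pi - 1) = q + 1 - t$. Hasse's theorem already forces $t^2 \le 4q$ in every case, so the real content is to pin down, among the integers $t$ in this range, exactly which occur; the split into the three cases will be organized by the characterization of supersingularity in Theorem~\ref{Theorem:Deuring}, namely that $E$ is ordinary precisely when $p \nmid t$ and supersingular precisely when $p \mid t$.

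For the \emph{necessity} of the conditions I would argue as follows. If $p \nmid t$ the curve is ordinary and we land in case (1), with nothing to check beyond Hasse. If $p \mid t$ the curve is supersingular, and here the key input is that $A := \End(E) \otimes \QQ$ is the quaternion algebra over $\QQ$ ramified exactly at $p$ and $\infty$. The field $\QQ(\pi) \subseteq A$ is generated by a root of $x^2 - t x + q$; since $A$ ramifies at $p$, the prime $p$ cannot split in $\QQ(\pi)$, and the requirement that $\QQ(\pi)$ embed into $A$ (together with $|\pi| = \sqrt q$) restricts the pair $(t,q)$. Running through the possibilities $p \mid t$, $t^2 \le 4q$ shows the only admissible values are $t^2 \in \{0, q, 2q, 3q, 4q\}$, and a parity analysis of $t^2 = m\,p^n$ forces the stated matching between $n$ even/odd and $p$ (for instance $t^2 = 3q$ is a perfect square only when $p = 3$ and $n$ is odd); the residual congruences, $p \equiv 1 \bmod 4$ for $t = 0$ with $n$ even and $p \not\equiv 1 \bmod 3$ for $t^2 = q$, come precisely from deciding when the relevant imaginary quadratic field is non-split at $p$ and hence embeds into $A$.

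For the \emph{sufficiency}, i.e.\ realizing each admissible $t$ by an actual curve, I would treat the ordinary and supersingular regimes separately. In the ordinary case ($p \nmid t$, $t^2 < 4q$) the natural route is Deuring's lifting theorem: the order $\ZZ[\pi]$ of discriminant $t^2 - 4q < 0$ sits inside an imaginary quadratic field, one chooses an elliptic curve over a number field with complex multiplication by the corresponding order, and reduces it modulo a suitable prime above $p$; one then verifies the reduction has the prescribed Frobenius, hence the prescribed point count. The supersingular values are handled by exhibiting explicit supersingular curves over $\FF_p$ or $\FF_{p^2}$ and passing to $\FF_q$ by base change, tracking the trace through the Frobenius action on $\End(E)$; the congruence conditions reappear here as exactly the conditions under which the required CM, by $\ZZ[\sqrt{-q}\,]$ or $\ZZ[\sqrt{-p}\,]$, actually exists.

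I expect the \emph{supersingular analysis} to be the main obstacle in both directions. The ordinary case reduces cleanly to CM lifting once the characteristic polynomial is in hand, but the supersingular case genuinely requires the arithmetic of the quaternion algebra $A$ — its local ramification at $p$ and $\infty$ and the embedding criterion for quadratic subfields — and it is this local analysis that produces the delicate congruences $p \equiv 1 \bmod 4$ and $p \not\equiv 1 \bmod 3$. A fully self-contained treatment would invoke the Honda--Tate classification of isogeny classes of abelian varieties over finite fields to guarantee existence, together with Tate's theorem relating isogeny to the characteristic polynomial of Frobenius to pin down the trace; absent those, the explicit supersingular constructions would have to be verified case by case.
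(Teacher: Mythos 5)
The paper does not actually prove this statement: it is quoted verbatim with a citation to Schoof \cite{MR914657} (the result goes back to Waterhouse's thesis), so there is no internal proof to compare against. Your outline is, in substance, the standard proof from those sources — Hasse plus the ordinary/supersingular dichotomy via $p\mid t$, the quaternion-algebra embedding criterion for the supersingular traces, and Deuring lifting / Honda--Tate for existence — so you have identified the right route rather than a different one. That said, what you have written is a plan, not a proof: the two places where all the work lives are left as black boxes. In the necessity direction, ``running through the possibilities $p\mid t$, $t^2\le 4q$'' to conclude $t^2\in\{0,q,2q,3q,4q\}$ is precisely the hard step; it is not a consequence of the embedding criterion alone but of the Honda--Tate computation showing that for other traces divisible by $p$ the associated simple abelian variety has dimension greater than one. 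In the sufficiency direction, Deuring lifting and the explicit supersingular constructions are invoked but not carried out. As a reduction to standard cited theorems this is acceptable for a literature result, but it is not self-contained.

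One concrete point you should notice if you push the supersingular analysis through: for $t=0$ and $n$ even we have $\QQ(\pi)=\QQ(\sqrt{-q})=\QQ(i)$, and the requirement that this field embed into the quaternion algebra ramified at $p$ and $\infty$ is that $p$ \emph{not} split in $\QQ(i)$, i.e.\ $p\not\equiv 1\bmod 4$. This is the condition in Waterhouse and Schoof, and it is the opposite of the condition $p\equiv 1\bmod 4$ appearing in the statement as transcribed in this paper (compare: over $\FF_{25}$ there is no curve with $26$ points). So your claim that the stated congruence ``comes precisely from'' the embedding criterion is not right as written — your own method, correctly executed, proves the corrected statement and exposes a transcription error in the theorem you were asked to prove. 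The analogous check for $t^2=q$ gives $\QQ(\sqrt{-3})$ and hence $p\not\equiv 1\bmod 3$, which the statement does have correctly.
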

\begin{corollary}[of Theorem~\ref{Theorem:Schoof} ] \label{Cor:Schoof}
	Let $ E  $ be as in Theorem~\ref{Theorem:Schoof}. Then $ E $ is supersingular if and only if $ t^2 = 0$, $ q $, $ 2q $, $ 3q $, or $4q$.
\end{corollary}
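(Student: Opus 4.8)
The plan is to combine two ingredients already available in the excerpt: the trace characterization of supersingularity stated just after the definition (namely that $E/\FF_q$ is supersingular exactly when $t \equiv 0 \pmod{p}$, where $\#E(\FF_q) = q+1-t$), and the exhaustive list of realizable pairs $(q,t)$ furnished by Schoof's theorem (Theorem~\ref{Theorem:Schoof}). I would prove the two implications separately, since they require rather different inputs.

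For the backward implication---if $t^2 \in \{0,q,2q,3q,4q\}$ then $E$ is supersingular---I would argue purely from the integrality of $t$ together with $q = p^n$, without invoking Schoof. In each of the five cases I would check that $p \mid t$ by a valuation count. Writing $t^2 = c\,p^n$ with $c \in \{1,2,3,4\}$ (or $t = 0$), the fact that $t^2$ is a perfect square constrains the exponents: $t^2 = 2p^n$ can be a square only if $p = 2$ (otherwise the exponent of $2$ equals $1$, which is odd), yielding $t = 2^{(n+1)/2}$; likewise $t^2 = 3p^n$ forces $p = 3$ and $t = 3^{(n+1)/2}$; while $t^2 = p^n$ or $t^2 = 4p^n$ forces $n$ even, giving $t = \pm p^{n/2}$ or $t = \pm 2p^{n/2}$. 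In every case $p \mid t$, so $E$ is supersingular by the trace characterization.

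For the forward implication---if $E$ is supersingular then $t^2 \in \{0,q,2q,3q,4q\}$---I would use that a supersingular $E$ has $p \mid t$, so the realizable pair $(q,t)$ cannot satisfy condition~(1) of Schoof's theorem (which demands $t \not\equiv 0 \pmod{p}$). Hence $(q,t)$ must satisfy condition~(2) or condition~(3), and reading the admissible traces directly off those two lists yields precisely $t = 0$, $t^2 = q$, $t^2 = 2q$, $t^2 = 3q$, or $t^2 = 4q$, which is the claim.

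I expect the only real subtlety to lie in the forward direction. It is tempting to try to deduce $t^2 \in \{0,q,2q,3q,4q\}$ from Hasse's bound $t^2 \le 4q$ together with $p \mid t$ alone, but this fails: over $\FF_{p^4}$, for instance, the value $t = 2p$ satisfies both constraints yet has $t^2 = 4p^2 \notin \{0,q,2q,3q,4q\}$. Such phantom traces are exactly what Schoof's theorem rules out as non-realizable, so the deeper classification of possible group orders is genuinely needed here; this is where the weight of the argument sits, whereas the converse is entirely elementary.
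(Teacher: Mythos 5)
The paper states this corollary without any proof, so there is nothing to compare against line by line; your argument is correct and is clearly the intended derivation, combining the trace criterion $t \equiv 0 \pmod{p}$ for supersingularity with the list of realizable traces in Theorem~\ref{Theorem:Schoof}. Your valuation check in the backward direction and your remark that Hasse's bound together with $p \mid t$ alone would not suffice (e.g.\ the non-realizable trace $t = 2p$ over $\FF_{p^4}$ for odd $p$) are both sound.
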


\begin{theorem}[R\"{u}ck, \cite{MR890272}] \label{Theorem:Ruck}
	Let $ N = q+1 -t$ such that it occurs as an order of an elliptic curve over $ \FF_q $. Let $ N=  p^e n_1 n_2 $ with $ p \nmid n_1 n_2 $, and $ n_1 | n_2 $ (possibly $ n_1 =1 $). There is an elliptic curve $ E/\FF_q $ such that 
	\begin{equation}
	E(\FF_q) \cong {\ZZ_{p^e}} \, \oplus \,  {\ZZ_{n_1}} \,  \oplus \, {\ZZ_{n_2}}
	\end{equation}
	if and only if 
	\begin{enumerate}
		\item $ n_1 = n_2 $ in the case $ n $ is even and $ t^2 = 4 q $ of Theorem~\ref{Theorem:Schoof}.
		\item $ n_1  |  q- 1 $ in all other cases of Theorem~\ref{Theorem:Schoof}.
	\end{enumerate}
\end{theorem}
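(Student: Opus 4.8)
The plan is to analyze the action of the $q$-power Frobenius endomorphism $\pi$ on the torsion of $E$, exploiting that $E(\FF_q)$ is exactly the kernel of $\pi-1$ acting on $E(\overline{\FF_q})$. First I would pin down the general shape of the group: for every prime $\ell\neq p$ one has $E[\ell^k]\cong(\ZZ/\ell^k)^2$, so the prime-to-$p$ part of $E(\FF_q)$ needs at most two generators, while the $p$-primary part is cyclic (it is $\ZZ_{p^e}$ for ordinary $E$, and trivial for supersingular $E$, where moreover $t\equiv 0\bmod p$ forces $N\equiv 1\bmod p$). This already yields a decomposition $\ZZ_{p^e}\oplus\ZZ_{n_1}\oplus\ZZ_{n_2}$ with $p\nmid n_1 n_2$ and $n_1\mid n_2$, reducing the theorem to deciding which pairs $(n_1,n_2)$, with $n_1 n_2$ the prime-to-$p$ part of $N$, are realizable.

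For the necessity of the divisibility condition I would invoke the Weil pairing. If $n_1>1$ then the full $n_1$-torsion $E[n_1]$ lies in $E(\FF_q)$, so $\pi$ acts trivially on $E[n_1]$; since the pairing $e_{n_1}\colon E[n_1]\times E[n_1]\to\mu_{n_1}$ is nondegenerate and Galois-equivariant, triviality of the Galois action forces $\mu_{n_1}\subseteq\FF_q^{\times}$, i.e. $n_1\mid q-1$. In the exceptional branch where $n$ is even and $t^2=4q$, the curve is supersingular with $\pi=\pm\sqrt{q}\in\ZZ$ acting as a scalar on every Tate module; then $\pi-1$ is a scalar, its cokernel on $\ZZ_\ell^2$ is $(\ZZ/\ell^{v_\ell(N)})^2$, and $n_1=n_2$ is forced. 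This settles the necessity (\emph{only if}) direction in both branches.

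The harder direction is existence, where I would pass to the endomorphism ring and work one prime at a time. The characteristic polynomial of $\pi$ is $X^2-tX+q$, so that of $\pi-1$ is $X^2-(t-2)X+N$, whence $\det(\pi-1)=N$; on the $\ell$-adic Tate module the elementary divisors $\ell^{a_\ell}\mid\ell^{b_\ell}$ of $\pi-1$ satisfy $a_\ell+b_\ell=v_\ell(N)$ and $v_\ell(n_1)=a_\ell$. The key point is that $a_\ell$ measures how divisible $\pi-1$ is inside $\End(E)$. By the Deuring--Waterhouse description of an isogeny class, as $E$ ranges over the class its endomorphism ring ranges over the orders $\mathcal{O}$ with $\ZZ[\pi]\subseteq\mathcal{O}\subseteq\mathcal{O}_K$ in the imaginary quadratic field $K=\QQ(\pi)$, and Lenstra's identification $E(\FF_q)\cong\mathcal{O}/(\pi-1)\mathcal{O}$ lets me read off $(n_1,n_2)$ directly from $\mathcal{O}$. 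I would then verify that every admissible profile $(a_\ell)_\ell$ subject to $0\le a_\ell\le\min\{v_\ell(q-1),\lfloor v_\ell(N)/2\rfloor\}$ is achieved by some order, and that these constraints assemble exactly to $n_1\mid q-1$ (respectively $n_1=n_2$ in the $t^2=4q$ case).

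The main obstacle I anticipate is precisely this existence step: one must show that the prescribed elementary divisors of $\pi-1$ are \emph{simultaneously} realizable by a single curve, which requires controlling the conductor of $\End(E)$ at all primes at once and confirming that the Weil-pairing constraint $n_1\mid q-1$ is the only obstruction, with no extra congruences beyond those recorded in Theorem~\ref{Theorem:Schoof}. In the supersingular branches, where $\End(E)$ is a quaternion order, I would argue separately, using that $\pi$ is a scalar (or satisfies a quadratic with the stated norm) so that the two-generator bound together with the pairing constraint already pins down $(n_1,n_2)$ uniquely.
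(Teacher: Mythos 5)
This is R\"uck's theorem, which the paper quotes with a citation to \cite{MR890272} and does not prove, so there is no internal proof to compare against; your proposal has to be judged on its own. Your overall route is the standard (and essentially R\"uck's own) one: the two-generator bound from $E[\ell^k]\cong(\ZZ/\ell^k)^2$ plus cyclicity of the $p$-part gives the shape of the group, and the Weil-pairing argument correctly establishes necessity of $n_1\mid q-1$ (since $n_1\mid n_2$ forces $E[n_1]\subseteq E(\FF_q)$, hence $\mu_{n_1}\subseteq\FF_q^{\times}$). The treatment of the exceptional branch is also right in substance, though note a slip: when $\pi=\pm\sqrt{q}$ is a scalar, $E(\FF_q)=\ker(\pi-1)\cong\bigl(\ZZ/(\sqrt{q}\mp1)\bigr)^2$, so the $\ell$-part is $(\ZZ/\ell^{v_\ell(N)/2})^2$, not $(\ZZ/\ell^{v_\ell(N)})^2$ as written; the conclusion $n_1=n_2$ is unaffected.

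The genuine gap is the existence direction, which you correctly flag as the hard part but then only assert. The claim that every profile $(a_\ell)_\ell$ with $0\le a_\ell\le\min\{v_\ell(q-1),\lfloor v_\ell(N)/2\rfloor\}$ is realized by some order $\mathcal{O}$ with $\ZZ[\pi]\subseteq\mathcal{O}\subseteq\mathcal{O}_K$ \emph{is} the content of R\"uck's theorem, and it does not follow formally from the Deuring--Waterhouse description plus Lenstra's isomorphism $E(\FF_q)\cong\mathcal{O}/(\pi-1)\mathcal{O}$: one must actually compute the elementary divisors of $\pi-1$ on the order of conductor $f$ at each prime $\ell$ (showing $a_\ell=\min\{v_\ell(\pi-1$ in $\mathcal{O}),\dots\}$ varies through the full admissible range as $v_\ell(f)$ varies), and check that the local choices can be made independently and assemble to a single curve. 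The supersingular non-exceptional branches ($t=0$ or $t^2=q,2q,3q$) also need a separate argument, since there $\pi$ need not be a scalar and Lenstra's quotient description must be replaced by the case analysis that the paper records as Theorem~\ref{Theorem:SchoofSupersingular}. As it stands your write-up is a correct and well-organized plan with the necessity half complete, but the sufficiency half is a statement of what must be verified rather than a verification.
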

In view of  Corollary~\ref{Cor:Schoof} the following theorem characterizes the possible group structures of supersingular elliptic curves $ E(\FF_q) $.
\begin{theorem}[Schoof, \cite{MR914657}] \label{Theorem:SchoofSupersingular}
	Let $ \#E(\FF_q) = q+1 -t$. 
	\begin{enumerate}
		\item If $ t^2 = q $, $ 2q $, or $ 3q $, then $ E(\FF_q) $ is cyclic.
		\item If $ t^2 = 4q  $, then $ E(\FF_q) $ is of the form of $ \ZZ_m  \oplus \ZZ_m    $ with $ m= \sqrt{q} -1  $ if $ t = 2 \sqrt{q} $, otherwise if $ t=- 2\sqrt{q} $ 
		then $ m=  \sqrt{q} + 1  $. 
		\item If $ t=0 $ and $ q \not \equiv 3 \mod 4 $, then $ E(\FF_q) $ is cyclic. If $ t=0 $ and $ q \equiv 3 \mod 4  $, then either $ E(\FF_q) $ is cyclic, or
		 $ E(\FF_q) \cong \, \ZZ_m  \oplus \, \ZZ_2 $, with $ m =\frac{q+1}{2} $. 
	\end{enumerate}
\end{theorem}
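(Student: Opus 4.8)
The plan is to read off the group structure in each supersingular case directly from R\"uck's theorem (Theorem~\ref{Theorem:Ruck}), using Corollary~\ref{Cor:Schoof} to enumerate the relevant values of $t$. Write $N = \#E(\FF_q) = q + 1 - t = p^e n_1 n_2$ with $p \nmid n_1 n_2$ and $n_1 \mid n_2$, so that $E(\FF_q) \cong \ZZ_{p^e} \oplus \ZZ_{n_1} \oplus \ZZ_{n_2}$; this group is cyclic precisely when $n_1 = 1$. The first observation I would record is that in every supersingular case $p \mid t$, whence $N \equiv 1 \pmod p$ and $e = 0$; thus $N = n_1 n_2$ and, since $n_1 \mid n_2$, also $n_1^2 \mid N$. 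By Corollary~\ref{Cor:Schoof} it then remains to treat $t^2 \in \{0, q, 2q, 3q, 4q\}$ one at a time.

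For $t^2 = 4q$ (necessarily the even-$n$ case of Theorem~\ref{Theorem:Schoof}), R\"uck's condition (1) forces $n_1 = n_2$, so $E(\FF_q) \cong \ZZ_{n_1} \oplus \ZZ_{n_1}$. Here $N = q + 1 - t = (\sqrt q \mp 1)^2$, and taking square roots gives $n_1 = \sqrt q \mp 1 = m$ according to the sign of $t$, which is exactly the claimed form. In all remaining cases we are in situation (2) of R\"uck's theorem, so $n_1 \mid q - 1$; combined with $n_1 \mid N$ this yields the crucial divisibility $n_1 \mid \gcd(N, q-1) = \gcd(t - 2,\, q - 1)$, which I would exploit case by case.

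For $t = 0$ one gets $n_1 \mid 2$. When $q$ is even this forces $n_1 = 1$; when $q$ is odd, $n_1 = 2$ is possible only if $2 \mid n_2 = (q+1)/2$, i.e. $q \equiv 3 \pmod 4$, in which case $E(\FF_q) \cong \ZZ_2 \oplus \ZZ_{(q+1)/2}$, and otherwise the group is cyclic --- reproducing the $q \bmod 4$ dichotomy. For $t^2 = 2q$ (so $p = 2$) and $t^2 = 3q$ (so $p = 3$), a short Euclidean-algorithm computation with $t = \pm\sqrt{2q}$, respectively $t = \pm\sqrt{3q}$, shows $\gcd(t - 2,\, q - 1) = 1$, forcing $n_1 = 1$ and cyclicity.

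The case $t^2 = q$ is the one I expect to be the main obstacle, because the gcd bound only gives $n_1 \mid 3$ rather than $n_1 = 1$ outright. Writing $s = \sqrt q = |t|$, one checks $\gcd(N, q - 1) \mid 3$, so the only alternative to cyclicity would be $n_1 = 3$, which by $n_1^2 \mid N$ demands $9 \mid N$. To eliminate this I would observe that $N = s^2 \mp s + 1 \equiv 3 \pmod 9$ whenever $3 \mid N$, so $9 \nmid N$ and hence $n_1 = 1$. Assembling the four cases yields the three assertions of the theorem.
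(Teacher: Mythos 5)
The paper does not prove this statement at all: it is quoted verbatim as a known theorem of Schoof, with only the citation \cite{MR914657} standing in for a proof. Your proposal therefore cannot match the paper's (nonexistent) argument, but it is a correct and essentially complete derivation of the result from the other cited ingredients already present in the paper, namely R\"uck's theorem (Theorem~\ref{Theorem:Ruck}) together with the classification of supersingular traces (Corollary~\ref{Cor:Schoof}). I checked the individual steps: $p \mid t$ does give $N \equiv 1 \pmod p$ and hence $e=0$ and $n_1^2 \mid N$; the $t^2=4q$ case reads off correctly from R\"uck's condition (1) since $N=(\sqrt q \mp 1)^2$; the bound $n_1 \mid \gcd(t-2,\,q-1)$ is right, and the resulting gcds are $\leq 2$, $=1$, $=1$, and $\mid 3$ in the cases $t=0$, $t^2=2q$, $t^2=3q$, $t^2=q$ respectively (a slick uniform way to see the last three: a prime $\ell \mid n_1$ would satisfy $q \equiv 1$ and $t \equiv 2 \pmod \ell$, hence $\ell \mid t^2 - cq = 4 - c$ for $c = 2, 3, 1$); and your observation that $s^2 \mp s + 1 \equiv 3 \pmod 9$ whenever it is divisible by $3$ correctly rules out $n_1 = 3$ in the $t^2 = q$ case. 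The only ingredient you use tacitly is that $E(\FF_q)$ is always a product of at most two cyclic groups with cyclic $p$-part (so that R\"uck's decomposition $\ZZ_{p^e}\oplus\ZZ_{n_1}\oplus\ZZ_{n_2}$ exhausts the possibilities); this is standard and is presupposed by the statement of Theorem~\ref{Theorem:Ruck}, but it deserves one sentence. What your route buys is a self-contained proof of a statement the paper merely imports; what the citation buys is brevity and attribution to the original source, where the result is proved by different (more structural) means.
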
 
The following theorem gives a simple method for determining whether an elliptic curve is supersingular.
\begin{theorem}[Deuring, \cite{MR5125}] \label{Theorem:DeuringCriteriaSupersingular}
	Let $ p >2 $. The elliptic curve $ E $ over $ \FF_q $, with Weierstrass equation $ y^2 =f(x) $, is supersingular if and only if the coefficient of  $ x^{p-1} $ in $ f(x)^{\frac{p-1}{2}} $ is $ 0 $.
\end{theorem}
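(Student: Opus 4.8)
The plan is to reduce the statement to the vanishing of the \emph{Hasse invariant}, which I would access through the action of the Cartier operator on the one-dimensional space of holomorphic differentials of $E$. Throughout write $q=p^n$ with $p>2$, take $E\colon y^2=f(x)$ with $f$ a cubic, and let $\omega = dx/y$ be the invariant differential, which spans $H^0(E,\Omega^1_E)$ since $E$ has genus $1$.

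First I would fix the most convenient characterization of supersingularity. By Theorem~\ref{Theorem:Deuring}, $E$ is supersingular precisely when $E[p]=0$, equivalently when the dual Frobenius $\hat\phi_1$ (the Verschiebung) is purely inseparable, equivalently when the $p$-power Frobenius acts as $0$ on $H^1(E,\mathcal O_E)$. Dualizing, this last condition says exactly that the Cartier operator $\mathcal C$ --- the $p^{-1}$-linear operator on differentials determined by $\mathcal C(a^p\eta)=a\,\mathcal C(\eta)$, by $\mathcal C(dh)=0$, and by $\mathcal C(x^{kp-1}\,dx)=x^{k-1}\,dx$ --- annihilates $\omega$. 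As $H^0(E,\Omega^1_E)$ is one-dimensional, ``$\mathcal C$ nilpotent'' and ``$\mathcal C(\omega)=0$'' coincide, so the whole theorem reduces to computing $\mathcal C(\omega)$.

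The computation is short. Using $y^p=y\,(y^2)^{(p-1)/2}=y\,f(x)^{(p-1)/2}$, one rewrites the invariant differential as a $p$-th power times a polynomial differential,
$$\omega=\frac{dx}{y}=\Bigl(\frac1y\Bigr)^{p}f(x)^{(p-1)/2}\,dx,$$
so $p^{-1}$-linearity gives $\mathcal C(\omega)=\tfrac1y\,\mathcal C\bigl(f(x)^{(p-1)/2}\,dx\bigr)$. Writing $f(x)^{(p-1)/2}=\sum_k c_k x^k$, a polynomial of degree $3(p-1)/2$, the monomial formula for $\mathcal C$ kills every term except those with $k\equiv-1\pmod p$; in the range $0\le k\le 3(p-1)/2$ the only such exponent is $k=p-1$ (the next one, $2p-1$, exceeds $3(p-1)/2$ for $p\ge 3$). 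Hence $\mathcal C\bigl(f^{(p-1)/2}\,dx\bigr)=c_{p-1}^{1/p}\,dx$ and $\mathcal C(\omega)=c_{p-1}^{1/p}\,\omega$, so $\mathcal C(\omega)=0$ if and only if $c_{p-1}=0$, i.e. if and only if the coefficient of $x^{p-1}$ in $f(x)^{(p-1)/2}$ vanishes, which by the previous paragraph is exactly supersingularity.

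The main obstacle --- and the one place genuine input is required --- is the identification in the second paragraph of supersingularity with the vanishing of $\mathcal C$ on $H^0(E,\Omega^1_E)$; this rests on the standard duality between the Cartier operator and Frobenius on $H^1(E,\mathcal O_E)$ and on the theory of the Hasse invariant, which I would cite from the references rather than redevelop. As a self-contained sanity check over the prime field, I would also record the elementary point count: since $\bigl(\frac{a}{\FF_p}\bigr)\equiv a^{(p-1)/2}\pmod p$ and $\sum_{x\in\FF_p}x^k\equiv-1$ when $(p-1)\mid k$ (and $0$ otherwise), one obtains $\#E(\FF_p)=p+1+\sum_x f(x)^{(p-1)/2}\equiv 1-c_{p-1}\pmod p$, so $t\equiv c_{p-1}\pmod p$. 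Combined with the characterization ``supersingular $\iff t\equiv 0\pmod p$'' noted earlier, this recovers the theorem for $q=p$ and confirms that $c_{p-1}$ is the correct invariant.
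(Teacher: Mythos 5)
The paper offers no proof of this statement at all: it is quoted as a classical theorem of Deuring with only the citation \cite{MR5125} (the modern references are Silverman, Thm.~V.4.1, and Washington, Prop.~4.37), so your argument is competing with the textbook proofs rather than with anything in the paper. Against that standard, it holds up: the identity $\omega=(1/y)^{p}f(x)^{(p-1)/2}\,dx$ is correct, in the range $0\le k\le 3(p-1)/2$ the only exponent $\equiv -1 \pmod p$ is $k=p-1$ (indeed $2p-1>3(p-1)/2$ always), and on a one-dimensional space of differentials the semilinearity of $\mathcal C$ makes ``nilpotent'' and ``zero on $\omega$'' coincide, so $\mathcal C(\omega)=c_{p-1}^{1/p}\omega$ does reduce everything to $c_{p-1}$. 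The one step you take on faith --- that supersingularity (inseparability of the Verschiebung, via the paper's Theorem~\ref{Theorem:Deuring}) is equivalent to the vanishing of $\mathcal C$ on $H^0(E,\Omega^1_E)$, through Serre duality with Frobenius on $H^1(E,\mathcal O_E)$ --- is precisely the Hasse-invariant theory and is the real content; citing it is defensible for a result of this vintage, but note that the usual textbook treatments either compute the Frobenius action on $H^1(E,\mathcal O_E)$ directly with a \v{C}ech cover (avoiding Cartier--Frobenius duality as a black box) or, over the prime field, run exactly the point count you give at the end. That closing sanity check is the right consistency test but only proves the case $q=p$; for $q=p^n$ the criterion still involves the coefficient of $x^{p-1}$ rather than $x^{q-1}$, so the cohomological argument cannot be dispensed with. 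One small slip there: $\sum_{x\in\FF_p}x^{k}\equiv-1$ only for $k>0$ with $(p-1)\mid k$, while for $k=0$ the sum is $p\equiv 0$; that is exactly what makes the constant term $c_0$ drop out and the congruence $\#E(\FF_p)\equiv 1-c_{p-1}\pmod p$ come out as you state it.
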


Next, for a finite field with even characteristic we have the following results taken from \cite{menezes1990isomorphism} and listed in Table~\ref{Table:odd} and Table~\ref{Table:even}. For $ q=2^n $, a representative curve $ E/\FF_q $ from each of the isomorphism classes of supersingular curve over $ \FF_q $ is listed. The order and the group structure of $ E(\FF_q) $ is also listed along with the curve for both the case when $ n $ is odd (see Table~\ref{Table:odd})) and  $ n $ even (see Table ~\ref{Table:even}).

\begin{table}[htbp]
	\centering
	\begin{tabular}{l*{4}{S[table-format = 2.1]}}
		\toprule
		\textbf{Curve over $ 2^n $}                                    &	
		\textbf{n} &
		{\specialcellbold{Order  }}                 &
		{\specialcellbold{Group}}         & \\
		\midrule 
		\midrule
		$ y^2 + y = x^3 $       &  \text{odd}  & \text{$ q+1 $} &  {cyclic} \\
		\midrule 
		$ y^2 + y = x^3 + x $       &     {\specialcellbold{ $ n \equiv 1,7 \mod 8 $ \\ $ n \equiv 3,5 \mod 8 $}} &  {\specialcell{ $ q+1 + \sqrt{2 q} $ \\ $ q+1 - \sqrt{2 q} $}}  &   {\specialcell{ cyclic \\  cyclic}} \\	
		\midrule
		$ y^2 + y = x^3 + x +1 $       &     {\specialcellbold{ $ n \equiv 1,7 \mod 8 $ \\ $ n \equiv 3,5 \mod 8 $}} & {\specialcell{ $ q+1 - \sqrt{2 q} $ \\ $ q+1 + \sqrt{2 q} $}}  &   {\specialcell{ cyclic \\  cyclic}} \\
			\bottomrule
	\end{tabular} \caption{Elliptic curves over $ \FF_{2^n} $, where $ n $ is odd. } \label{Table:odd}
\end{table}

 \begin{table}[htbp]
 	\centering
 	\begin{tabular}{l*{4}{S[table-format = 2.1]}}
 		\toprule
 		\textbf{Curve over $ 2^n $}                                    &	
 		\textbf{n} &
 		{\specialcellbold{Order  }}                 &
 		{\specialcellbold{Group}}         & \\
 		\midrule 
 		\midrule
 		$ y^2 + y = x^3  + \delta x$       &  \text{even}  & \text{$ q+1 $} &  {cyclic} \\
 		\midrule 
 		$ y^2 + \gamma y = x^3  $       &     {\specialcellbold{ $ n \equiv 0 \mod 4 $ \\ $ n \equiv 2 \mod 4 $}} &  {\specialcell{ $ q+1 + \sqrt{ q} $ \\ $ q+1 - \sqrt{q} $}}  &   {\specialcell{ cyclic \\  cyclic}} \\	
 		\midrule
 		$ y^2 + \gamma y = x^3  + \alpha $       &     {\specialcellbold{ $ n \equiv 0 \mod 4 $ \\ $ n \equiv 2 \mod 4 $}} &  {\specialcell{ $ q+1 - \sqrt{ q} $ \\ $ q+1 + \sqrt{q} $}}  &   {\specialcell{ cyclic \\  cyclic}} \\	 
 		\midrule
 		$ y^2 + \gamma^2 y = x^3  $       &     {\specialcellbold{ $ n \equiv 0 \mod 4 $ \\ $ n \equiv 2 \mod 4 $}} &  {\specialcell{ $ q+1 + \sqrt{ q} $ \\ $ q+1 - \sqrt{ q} $}}  &   {\specialcell{ cyclic \\  cyclic}} \\	
 		\midrule
 		$ y^2 + \gamma^2 y = x^3 + \beta $       &     {\specialcellbold{ $ n \equiv 0 \mod 4 $ \\ $ n \equiv 2 \mod 4 $}} &  {\specialcell{ $ q+1 - \sqrt{ q} $ \\ $ q+1 + \sqrt{ q} $}}  &   {\specialcell{ cyclic \\  cyclic}} \\	
 		\midrule
 		$ y^2 +  y = x^3  $       &     {\specialcellbold{ $ n \equiv 0 \mod 4 $ \\ $ n \equiv 2 \mod 4 $}} &  {\specialcell{ $ q+1 - \sqrt{2 q} $ \\ $ q+1 + \sqrt{2 q} $}}  &   {\specialcell{ $ \ZZ_{\sqrt{q}-1} \oplus \ZZ_{\sqrt{q}-1} $ \\  $ \ZZ_{\sqrt{q}+1} \oplus \ZZ_{\sqrt{q}+1} $}} \\	
 		\midrule
 		$ y^2 +  y = x^3 + \omega $       &     {\specialcellbold{ $ n \equiv 0 \mod 4 $ \\ $ n \equiv 2 \mod 4 $}} &  {\specialcell{ $ q+1 + \sqrt{2 q} $ \\ $ q+1 - \sqrt{2 q} $}}  &   {\specialcell{ $ \ZZ_{\sqrt{q}+1} \oplus \ZZ_{\sqrt{q}+1} $ \\  $ \ZZ_{\sqrt{q}-1} \oplus \ZZ_{\sqrt{q}-1} $}} \\	
 			\midrule		
 		\bottomrule
 	\end{tabular} \caption{Elliptic curves over $ \FF_{2^n} $, where $ n $ is even. } \label{Table:even}
 \end{table}

 In order to use supersingular elliptic curves and their group structure to solve the orchard problem on a projective plane over $ \FF_q $, we consider some representative supersingular elliptic curves. The results of the following lemma are well-known.
 \begin{lemma} \label{Lemma:qoddsupersingular}
 	\leavevmode
 	\begin{enumerate} 
 		\item Let $ q $ be odd and $ q \equiv 2 \mod 3 $. Let the elliptic curve $ E \colon y^2 = x^3 +b $ be defined over $ \FF_q $, with $ b \in \FF_q^{\times} $. Then $ E(\FF_q) \cong \ZZ_{q+1}$.
 		\item Let  $ q \equiv 3 \mod 4$. Let the elliptic curve $ E \colon y^2 = x^3 - x $ be defined over $ \FF_q $. Then $ E(\FF_q) \cong \ZZ_{\frac{q+1}{2}}  \, \oplus \, \ZZ_2  $.
 		\item Let  $ q \equiv 3 \mod 4$. Let the elliptic curve $ E \colon y^2 = x^3 + x $ be defined over $ \FF_q $. Then $ E(\FF_q) \cong \ZZ_{q+1} $.
 	\end{enumerate}
 \end{lemma}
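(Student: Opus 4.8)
The plan is to treat all three parts by the same two-step template: first pin down $\#E(\FF_q)$, which will in every case turn out to equal $q+1$ (so the trace is $t=0$ and, by Corollary~\ref{Cor:Schoof}, the curve is supersingular), and then count the $\FF_q$-rational points of order $2$ in order to decide, via Theorem~\ref{Theorem:SchoofSupersingular}(3), between the two admissible group structures. The key observation throughout is that the $2$-part of $E(\FF_q)$ is cyclic exactly when there is a single nontrivial $2$-torsion point, i.e. when $f(x)$ has a single root in $\FF_q$.

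For part (1), I would first note that $q \equiv 2 \pmod 3$ forces $\gcd(3,q-1)=1$, so the cubing map $x \mapsto x^3$ is a bijection of $\FF_q$. Hence, as $y$ ranges over $\FF_q$, the equation $x^3 = y^2 - b$ has exactly one solution $x$ for each $y$, giving precisely $q$ affine points; together with $O$ this yields $\#E(\FF_q) = q+1$, i.e. $t=0$. The same bijectivity shows $x^3 = -b$ has a unique root, so $(x,0)$ is the only $2$-torsion point in $E(\FF_q)$ and the $2$-part of $E(\FF_q)$ is cyclic. If $q \not\equiv 3 \pmod 4$ then Theorem~\ref{Theorem:SchoofSupersingular}(3) already gives cyclicity; if $q \equiv 3 \pmod 4$ the alternative $\ZZ_{(q+1)/2} \oplus \ZZ_2$ is excluded, since there $(q+1)/2$ is even and that group would contain the full $2$-torsion $\ZZ_2 \oplus \ZZ_2$, contradicting the single $2$-torsion point. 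Either way $E(\FF_q) \cong \ZZ_{q+1}$.

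For parts (2) and (3), I would compute $\#E(\FF_q)$ through the character-sum formula $\#E(\FF_q) = q + 1 + \sum_{x \in \FF_q} \left(\frac{f(x)}{\FF_q}\right)$. In both cases $f(-x) = -f(x)$, and since $q \equiv 3 \pmod 4$ the element $-1$ is a non-square, whence $\left(\frac{f(-x)}{\FF_q}\right) = -\left(\frac{f(x)}{\FF_q}\right)$; pairing $x$ with $-x$ makes the sum vanish, so again $\#E(\FF_q) = q+1$, $t=0$, supersingular. The two curves differ only in their $2$-torsion: for $y^2 = x^3 - x = x(x-1)(x+1)$ all three roots $0, \pm 1$ lie in $\FF_q$, so $E[2] \subseteq E(\FF_q)$, the group is non-cyclic, and Theorem~\ref{Theorem:SchoofSupersingular}(3) forces $E(\FF_q) \cong \ZZ_{(q+1)/2} \oplus \ZZ_2$; for $y^2 = x^3 + x = x(x^2+1)$ the factor $x^2+1$ has no root (as $-1$ is a non-square), leaving only the $2$-torsion point $(0,0)$, so the $2$-part is cyclic and $E(\FF_q) \cong \ZZ_{q+1}$.

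The computations are routine; the one point that needs care is the dichotomy in Theorem~\ref{Theorem:SchoofSupersingular}(3) when $t=0$ and $q \equiv 3 \pmod 4$, where both $\ZZ_{q+1}$ and $\ZZ_{(q+1)/2} \oplus \ZZ_2$ are a priori possible. The main obstacle is therefore to separate these two cases cleanly, and the right invariant for doing so is exactly the number of $\FF_q$-rational $2$-torsion points, equivalently the number of roots of $f$ in $\FF_q$; counting these correctly is precisely what distinguishes (1) and (3), which are cyclic, from (2), which is not. As a consistency check, supersingularity in each case could also be verified independently through Deuring's criterion (Theorem~\ref{Theorem:DeuringCriteriaSupersingular}), but the direct order computation already makes it transparent.
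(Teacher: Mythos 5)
Your proof is correct, but it is genuinely different from what the paper does: the paper offers no argument at all for this lemma, disposing of all three parts by citation (Washington Prop.~4.33 / Silverman Example~4.4 for part (1), and Menezes Examples~2.17--2.18 for the rest). You instead derive everything from tools already on the page, and both halves of your template check out. The point counts are right: for $y^2=x^3+b$ with $q\equiv 2 \bmod 3$, cubing is a bijection of $\FF_q$ since $\gcd(3,q-1)=1$, giving one affine point per $y$-value; for $x^3\pm x$ with $q\equiv 3\bmod 4$, the antisymmetry $f(-x)=-f(x)$ together with $\left(\frac{-1}{\FF_q}\right)=-1$ kills the character sum. And your mechanism for resolving the ambiguity in Theorem~\ref{Theorem:SchoofSupersingular}(3) is exactly the right invariant: when $q\equiv 3\bmod 4$ one has $4\mid q+1$, so $\ZZ_{\frac{q+1}{2}}\oplus\ZZ_2$ contains three points of order $2$ while $\ZZ_{q+1}$ contains one, and the count of $\FF_q$-roots of $f$ (one for $x^3+b$, three for $x^3-x$, one for $x^3+x$) decides the matter in each case. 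What your route buys is a self-contained proof from the character-sum formula and Schoof's structure theorem, at the cost of a page of routine verification that the authors chose to outsource to the standard references; a referee would accept either.
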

 \begin{proof}
\leavevmode
	\begin{enumerate}
		\item See Prop.~$ 4.33 $, \cite{MR2404461} or Example $ 4.4$,~\cite{MR2514094}) and Example $ 2.17$, \cite{MR1700718}.
		\item See Example $ 2.18$, \cite{MR1700718}.
		\item See Example $ 2.18$, \cite{MR1700718}.
	\end{enumerate}
 \end{proof}
 
As a side remark, we note that recently supersingular elliptic curves have found applications in cryptography.  For example, the supersingular elliptic curve $ y^2 = x^3 + 7 $ is used in Bitcoin's public-key cryptography.

\section{Orchard problem using elliptic curves over finite fields}\label{Sect:main}

Next we prove a lemma which will be essential for proving our main result.
\begin{lemma} \label{Lemma:MainGroupSolutionCount}
	Let $ G $ be a finite abelian group, such that $ G \cong \, \ZZ_{n_1} \, \oplus \ZZ_{n_2} \, \oplus \ZZ_{n_3} \, \oplus \cdots \ZZ_{n_k}$,
	with $ n_i | n_{i+1} $ for $ i =1,2,3,\cdots k-1 $. If $ 3| n_k$, then let $ j $ with $ 1 \leq j \leq k $ be the smallest index such that $ 3| n_j $, i.e., $ j $ is such that $ 3 | n_j $ but $ 3 \nmid n_{j-1} $ (with $ n_0 =1 $). Let 
	\begin{align}
	\Psi(G) = 
	\begin{cases}
0 \qquad &\text{if } 3 \nmid n_k,  \\
k-j  \qquad &\text{if } 3|n_j, 3 \nmid n_{j-1}.	
	\end{cases}
	\end{align}
Then the number of distinct solutions (where order does not matter) of $ x + y + z = 0 $ in $ G $  is 
\begin{equation}
\frac{1}{6}\left(\left(\prod_{i=1}^{k} n_i^2\right)  - 3 \left(\prod_{i=1}^{k} n_i\right)  + 2 (3)^{\Psi(G)}\right).
\end{equation}
\end{lemma}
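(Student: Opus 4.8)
The plan is to count the \emph{ordered} solutions first and then descend to the unordered ones by a symmetry argument, with the arithmetic of the $3$-torsion subgroup $G[3]$ producing the term $3^{\Psi(G)}$. Write $N = |G| = \prod_{i=1}^{k} n_i$. Since $x+y+z=0$ determines $z=-(x+y)$ uniquely from the pair $(x,y)$, the ordered solutions $(x,y,z)$ are in bijection with $G \times G$, so there are exactly $N^2$ of them.

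Next I would sort the ordered solutions by their pattern of coincidences. A triple with all three entries equal satisfies $3x=0$, i.e. $x \in G[3]$, so this class has size $|G[3]|$. A triple with exactly two entries equal is specified by choosing which of the three positions is distinguished ($3$ ways) together with an element $x$ with $z=-2x$ and $z \neq x$; the constraint $z \neq x$ says $3x \neq 0$, so this class has size $3\bigl(N-|G[3]|\bigr)$. Subtracting, the number of ordered triples with three \emph{distinct} entries is
\begin{equation*}
N^2 - 3\bigl(N-|G[3]|\bigr) - |G[3]| = N^2 - 3N + 2|G[3]|.
\end{equation*}
A genuine $3$-rich line meets the curve in three \emph{distinct} points, so the relevant objects are the unordered triples of distinct elements, and each arises from precisely $3!=6$ ordered solutions (the relation is symmetric). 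Dividing gives $\tfrac{1}{6}\bigl(N^2 - 3N + 2|G[3]|\bigr)$.

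It remains to evaluate $|G[3]|$ and match it to $3^{\Psi(G)}$. Using the invariant-factor decomposition, $G[3] \cong \bigoplus_{i=1}^{k}(\ZZ_{n_i})[3] \cong \bigoplus_{i=1}^{k}\ZZ_{\gcd(3,n_i)}$, whence $|G[3]| = \prod_{i=1}^{k}\gcd(3,n_i) = 3^{\#\{\, i \,:\, 3 \mid n_i \,\}}$. Because $n_i \mid n_{i+1}$, the indices with $3 \mid n_i$ form a terminal block $\{j,j+1,\dots,k\}$ when $3 \mid n_k$ (and the set is empty otherwise), so the exponent is exactly the count recorded by $\Psi(G)$ and $|G[3]| = 3^{\Psi(G)}$. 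Substituting yields $\tfrac{1}{6}\bigl(\prod_{i=1}^{k} n_i^2 - 3\prod_{i=1}^{k} n_i + 2\cdot 3^{\Psi(G)}\bigr)$, as claimed.

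I expect the delicate point to be bookkeeping rather than depth: one must (i) commit to counting triples with three \emph{distinct} entries (a bona fide $3$-rich line, as opposed to a tangent or inflectional line counted with multiplicity), and (ii) pin down the exponent in $|G[3]|$ exactly, since it is precisely the number of invariant factors divisible by $3$ and is the one place an off-by-one can slip in. A reassuring sanity check is $G \cong \ZZ_6$, realised by $y^2 = x^3+3$ over $\FF_5$: here $N=6$ and $|G[3]| = 3$, so the formula returns $\tfrac{1}{6}(36 - 18 + 6) = 4$, in agreement with the four lines of the introductory example.
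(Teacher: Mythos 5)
Your argument is the same in substance as the paper's: count ordered solutions (there are $N^2$ of them, since $z$ is determined by $(x,y)$), remove the degenerate ones, and divide by $6$. The paper does the removal by inclusion--exclusion over the three diagonals $x=y$, $y=z$, $z=x$, working coordinate-wise in the invariant-factor decomposition, whereas you partition by coincidence pattern and identify the all-equal solutions with the $3$-torsion subgroup globally; both routes give $N^2-3N+2|G[3]|$ ordered triples of distinct entries, and yours is the tidier of the two.

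The one place you slip is exactly the place you warned yourself about. You assert $|G[3]|=3^{\Psi(G)}$, but the terminal block $\{j,j+1,\dots,k\}$ of indices with $3\mid n_i$ has $k-j+1$ elements, while the lemma defines $\Psi(G)=k-j$; so under the stated definition $|G[3]|=3^{\Psi(G)+1}$ whenever $3\mid n_k$, and your identification is off by one. Your own sanity check exposes this: for $G\cong\ZZ_6$ you correctly use $|G[3]|=3$ to get $4$ lines, but the lemma's formula with $\Psi(G)=k-j=0$ gives $\tfrac{1}{6}(36-18+2)$, which is not even an integer. The resolution is that the count $\tfrac{1}{6}\bigl(N^2-3N+2|G[3]|\bigr)$ you derived is the correct statement, and the exponent in the lemma (and in the paper's proof, which makes the same silent identification of the number of all-equal solutions with $3^{\Psi(G)}$) should be $k-j+1$, as the corollaries that follow confirm: they add $18$, $6$, $6$ in the divisible cases, i.e.\ $2\cdot 3^{k-j+1}$, not $2\cdot 3^{k-j}$. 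So rather than claiming agreement with $\Psi(G)$ as defined, you should state explicitly that the exponent is the number of invariant factors divisible by $3$, namely $k-j+1$.
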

\begin{proof}	
Let $ x = (\alpha_1,\alpha_2, \cdots \alpha_k) $, $ y = (\beta_1,\beta_2, \cdots \beta_k) $, and $ z = (\gamma_1,\gamma_2, \cdots \gamma_k) $, with $ \alpha_i, \beta_i, \gamma_i \in \ZZ_{n_i} $. Now $ x + y + z = 0 $ in $\ZZ_{n_1} \, \oplus \ZZ_{n_2} \, \oplus \ZZ_{n_3} \, \oplus \cdots \ZZ_{n_k} $ if and only if 
 \begin{equation} \label{Eq:congruences}
 \alpha_i + \beta_i + \gamma_i = 0 \qquad \text{in }\, \ZZ_{n_i}, \quad \text{for } i=1,2,3, \cdots k.
 \end{equation}
In \eqref{Eq:congruences} $ \alpha_i $ and $ \beta_i $ both can take $ n_i $ different values and corresponding to each of the pairs $ (\alpha_i,\beta_i) $ there is a unique $ \gamma_i $, giving $ n_i^2 $ solutions to $ \alpha_i + \beta_i + \gamma_i = 0 $, including solutions with $ \alpha_i $, $ \beta_i $ and $ \gamma_i $ non-distinct. Varying $ i $ from $ 1 $ to $ k $ yields $ \prod_{i=1}^{k} n_i^2 $ such solutions. Next in order to get distinct solutions, $ \prod_{i=1}^{k} 3 n_i $ needs to be subtracted, since for each of the cases $ \alpha_i = \beta_i $, $ \beta_i = \gamma_i $ and $ \gamma_i = \alpha_i $, for $ i=1 $ to $ k $, there are $ \prod_{i=1}^{k} n_i  $ solutions; and to this twice the number of solution when $ \alpha_i = \beta_i = \gamma_i $, for $ i=1 $ to $ k $, should be added. 

Next we compute the number of solutions in case when $ \alpha_i = \beta_i = \gamma_i $, for $ i=1 $ to $ k $.  If $ \alpha_i = \beta_i = \gamma_i $ and $ 3 \nmid n_i $,  then there is a unique solution to $ \alpha_i + \beta_i + \gamma_i =0$, namely $ \alpha_i = \beta_i = \gamma_i = 0 $; otherwise if $ 3 | n_i $, then there are $ 3 $ different solutions to $ \alpha_i + \beta_i + \gamma_i =0$, namely $ \alpha_i =\beta_i = \gamma_i = 0, \frac{n_i}{3} $ and $ \frac{2n_i}{3} $. Using this observation it is easy to see that as $ i $ varies form $ 1 $ to $ k $, there are total $ 3^{\Psi(G)} $ such solutions. 

Therefore, we obtain total $$ \displaystyle \left(\left(\prod_{i=1}^{k} n_i^2\right)  - 3 \left(\prod_{i=1}^{k} n_i\right)  + 2 (3)^{\Psi(G)}\right) $$ solutions, which should be divided by $ 3!=6 $ to obtain the total number of distinct solutions to \eqref{Eq:congruences} where order does not matter.
\end{proof}
As immediate corollaries to Lemma~\ref{Lemma:MainGroupSolutionCount}, we obtain the following results.

\begin{corollary} \label{Cor:n1n2}
	Then the number of distinct solutions (up to ordering)  of $ x + y + z = 0 $ in 	$ \ZZ_{n_1} \oplus \ZZ_{n_2} $, with $ n_1 | n_2 $,	is  
	\begin{align*}
	\begin{cases}
	\frac{n_1^2 n_2^2 - 3n_1 n_2 + 18}{6} \quad  &\text{if }\,  3 | n_1, \\
	\frac{n_1^2 n_2^2 - 3n_1 n_2+ 2}{6} \quad  &\text{if }\, 3 \nmid n_1, \\
	\frac{n_1^2 n_2^2 - 3n_1 n_2+ 6}{6} \quad  &\text{if }\, 3 \nmid n_1, 3 |n_2, \\
	\end{cases} = \,	\begin{cases}
	\floor {\frac{N (N-3)}{6}} + 3 \qquad  &\text{if }\,  3 | N , \\
	\floor {\frac{N (N-3)}{6}} + 1 \qquad  &\text{if }\,  3 \nmid N,
	\end{cases}
	\end{align*}
	where $ N = {n_1n_2} $.
\end{corollary}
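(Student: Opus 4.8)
The plan is to read off the three-case formula as the $k=2$ specialization of Lemma~\ref{Lemma:MainGroupSolutionCount}, and then to match it against the floor expression by a short residue computation modulo $6$.

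First I would set $G = \ZZ_{n_1}\oplus\ZZ_{n_2}$, so that $k=2$ and the master count of Lemma~\ref{Lemma:MainGroupSolutionCount} becomes $\frac{1}{6}\bigl(n_1^2 n_2^2 - 3 n_1 n_2 + 2\cdot 3^{\Psi(G)}\bigr)$. The only quantity to determine is $3^{\Psi(G)}$, which by the argument in the proof of that lemma equals $3$ raised to the number of indices $i$ with $3 \mid n_i$ (it counts the ``diagonal'' triples $\alpha_i=\beta_i=\gamma_i$ solving the congruences for every $i$). The decisive structural fact is that $n_1 \mid n_2$ forces $3 \mid n_1 \Rightarrow 3 \mid n_2$, so the pair $(3 \mid n_1,\, 3 \mid n_2)$ can only be (yes, yes), (no, yes), or (no, no). These are precisely the three listed cases, and the number of divisible indices is $2$, $1$, $0$ respectively, giving $2\cdot 3^{2}=18$, $2\cdot 3^{1}=6$, and $2\cdot 3^{0}=2$. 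Substituting yields the left-hand column exactly as stated (the bare hypothesis ``$3 \nmid n_1$'' in the middle case being understood, via the third case, as $3 \nmid n_2$ as well, i.e.\ $3 \nmid N$).

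Next I would pass to the floor expression with $N = n_1 n_2$, using that $3 \mid N \iff 3 \mid n_2$, so the first and third cases together are exactly $3 \mid N$ and the middle case is $3 \nmid N$. When $3 \mid N$, writing $N = 3m$ gives $N(N-3) = 9m(m-1)$, which is divisible by $6$ because $m(m-1)$ is even; hence $\floor{N(N-3)/6} = (N^2-3N)/6$ with no rounding, and the surplus constants $18/6 = 3$ and $6/6 = 1$ turn the two $3 \mid N$ subcases into $\floor{N(N-3)/6}+3$ and $\floor{N(N-3)/6}+1$. When $3 \nmid N$, the integers $N$ and $N-3$ have opposite parity so their product is even, while $N(N-3) \equiv N^2 \equiv 1 \mod 3$; by the Chinese Remainder Theorem $N(N-3)\equiv 4 \mod 6$, whence $\floor{N(N-3)/6} = (N^2-3N-4)/6$, and the numerator of the middle case exceeds $N^2-3N-4$ by $6$, producing $\floor{N(N-3)/6}+1$.

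The computation is almost entirely bookkeeping, and I expect the only delicate point to be the handling of the floor: the argument hinges on observing that $N(N-3)/6$ is already an integer when $3 \mid N$ and leaves remainder $4$ when $3 \nmid N$, which is exactly what makes the additive constants collapse to $+3$ and $+1$. A minor secondary point is to read $3^{\Psi(G)}$ as $3$ to the number of indices with $3 \mid n_i$ --- the value supplied by the proof of Lemma~\ref{Lemma:MainGroupSolutionCount} --- rather than from the index data directly; once that count is in hand the rest is routine substitution.
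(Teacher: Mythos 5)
Your proof is correct and is exactly the argument the paper intends: the corollary is stated there as an immediate specialization of Lemma~\ref{Lemma:MainGroupSolutionCount} to $k=2$, followed by the same mod-$6$ bookkeeping you carry out (in particular your observations that $N(N-3)\equiv 0 \pmod 6$ when $3\mid N$ and $N(N-3)\equiv 4 \pmod 6$ when $3\nmid N$). Your choice to read $3^{\Psi(G)}$ as $3$ raised to the number of indices $i$ with $3\mid n_i$ --- taken from the counting argument in the lemma's proof rather than from the displayed definition $\Psi(G)=k-j$, which is off by one and would yield $6$ and $2$ in place of $18$ and $6$ --- is the right call, and is in fact necessary for the corollary's constants to come out as stated.
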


\begin{corollary} \label{Cor:mm}
	Then the number of distinct solutions (up to ordering)  of $ x + y + z = 0 $ in 	$ \ZZ_m \oplus \ZZ_m $	is  
	\begin{align*}
	\begin{cases}
	\frac{m^4 - 3m^2 + 18}{6} \quad  &\text{if }\,  3 | m, \\
	\frac{m^4 - 3m^2 + 2}{6} \quad  &\text{if }\, 3 \nmid m, \\
	\end{cases} = \,	\begin{cases}
	\floor {\frac{N (N-3)}{6}} + 3 \qquad  &\text{if }\,  3 | N , \\
	\floor {\frac{N (N-3)}{6}} + 1 \qquad  &\text{if }\,  3 \nmid N,
	\end{cases}
	\end{align*}
	where $ N = m^2 $.
\end{corollary}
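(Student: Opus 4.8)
The plan is to obtain Corollary~\ref{Cor:mm} as the special case of Lemma~\ref{Lemma:MainGroupSolutionCount} in which $k=2$ and $n_1=n_2=m$; this is legitimate because $m \mid m$, so $\ZZ_m \oplus \ZZ_m$ is already in the invariant-factor form required by the lemma. With these values one has $\prod_{i=1}^{2} n_i^2 = m^4$ and $\prod_{i=1}^{2} n_i = m^2$, so the count returned by the lemma is $\tfrac{1}{6}\bigl(m^4 - 3m^2 + 2\cdot 3^{\Psi(G)}\bigr)$, and everything then reduces to reading off $\Psi(G)$ and simplifying.

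First I would pin down $\Psi(G)$ for $G=\ZZ_m\oplus\ZZ_m$. The quantity $3^{\Psi(G)}$ counts the triples with $x=y=z$, that is, the solutions of $3x=0$ in $G$, which form the subgroup $G[3]$. If $3\nmid m$ then $G[3]=0$ and $\Psi(G)=0$, so the additive constant $2\cdot 3^{\Psi(G)}$ equals $2$; if $3\mid m$ then \emph{both} invariant factors are divisible by $3$, so $G[3]\cong\ZZ_3\oplus\ZZ_3$ has $9$ elements and the constant equals $2\cdot 9 = 18$. Substituting into the lemma yields $\tfrac{1}{6}(m^4-3m^2+18)$ when $3\mid m$ and $\tfrac{1}{6}(m^4-3m^2+2)$ when $3\nmid m$, which is exactly the left-hand column of the statement.

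It remains to match these against the floor expressions on the right, where $N=m^2$. Since $3$ is prime, $3\mid N \iff 3\mid m$, so the two case splits coincide. I would finish with a short residue computation on $N(N-3)$ modulo $6$. When $3\mid N$, the product $N(N-3)$ is divisible by $3$ and always even (as $N$ and $N-3$ share parity), hence by $6$, so $\floor{N(N-3)/6}=N(N-3)/6$ and $\floor{N(N-3)/6}+3 = (N^2-3N+18)/6$, matching the $3\mid m$ value. When $3\nmid N$ one checks that $N(N-3)\equiv 4 \pmod 6$ for each residue $N\equiv 1,2,4,5\pmod 6$, so $\floor{N(N-3)/6} = (N^2-3N-4)/6$ and $\floor{N(N-3)/6}+1 = (N^2-3N+2)/6$, matching the $3\nmid m$ value.

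I expect the only delicate point to be the bookkeeping in $\Psi(G)$: one must count \emph{all} invariant factors divisible by $3$ (here there are two when $3\mid m$), which is what produces the constant $18$ rather than $6$. The floor reformulation is then routine, resting solely on the fact that $N(N-3)$ is even together with the elementary congruence $N(N-3)\equiv 4\pmod 6$ whenever $3\nmid N$.
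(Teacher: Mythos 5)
Your proposal is correct and takes the same route as the paper, which offers no separate proof of this corollary but simply declares it an immediate specialization of Lemma~\ref{Lemma:MainGroupSolutionCount} with $k=2$ and $n_1=n_2=m$; your write-up supplies exactly the details the paper leaves implicit. One point is worth making explicit. You read $2\cdot 3^{\Psi(G)}$ as $2\,|G[3]|$, i.e.\ you take $\Psi(G)$ to be the number of invariant factors divisible by $3$, which for $\ZZ_m\oplus\ZZ_m$ with $3\mid m$ equals $2$ and yields the constant $2\cdot 9=18$. That is the value the corollary requires and is what the lemma's own proof actually computes (one factor of $3$ for each index $i$ with $3\mid n_i$), but it equals $k-j+1$, not the $k-j$ written in the lemma's definition of $\Psi$; applied literally, the lemma would give $\Psi=1$, hence the constant $2\cdot 3=6$ and the count $\frac{m^4-3m^2+6}{6}$, which contradicts the corollary. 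So your bookkeeping is the right one and silently corrects an off-by-one in the stated definition of $\Psi(G)$ --- a discrepancy worth noting, since the same issue affects Corollaries~\ref{Cor:n1n2} and~\ref{Cor:N} as well. The closing floor-function verification is routine and correct ($N(N-3)\equiv 4 \pmod 6$ when $3\nmid N$, and $6\mid N(N-3)$ when $3\mid N$), except for one misstated justification: $N$ and $N-3$ do not share parity --- they differ by the odd number $3$, so they have \emph{opposite} parity, which is precisely why one of them is even and the product $N(N-3)$ is always even.
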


\begin{corollary} \label{Cor:N}
	Then the number of distinct solutions (up to ordering)  of $ x + y + z = 0 $ in  $ \ZZ_N $	is  
	\begin{align*}
	\begin{cases}
	\frac{N^2 - 3N + 6}{6} \quad  &\text{if }\,  3 | N, \\
	\frac{N^2 - 3N + 2}{6} \quad  &\text{if }\, 3 \nmid N, \\
	\end{cases} = \,	\begin{cases}
	\floor {\frac{N (N-3)}{6}} + 1 .
	\end{cases}
	\end{align*}
\end{corollary}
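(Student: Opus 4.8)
The plan is to specialize Lemma~\ref{Lemma:MainGroupSolutionCount} to the cyclic case $G = \ZZ_N$, which amounts to taking $k = 1$ and $n_1 = N$. With these values the master formula of the lemma collapses to
\[
\frac{1}{6}\left(N^2 - 3N + 2\cdot 3^{\Psi(\ZZ_N)}\right),
\]
so the whole task reduces to two steps: (i) evaluating the correction term $3^{\Psi(\ZZ_N)}$, and (ii) checking that the resulting closed form equals $\floor{N(N-3)/6} + 1$ in both divisibility regimes.

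For step (i), I would lean on what the proof of Lemma~\ref{Lemma:MainGroupSolutionCount} actually establishes: the quantity $3^{\Psi(G)}$ is precisely the number of ``diagonal'' solutions $x = y = z$ of $x+y+z=0$, that is, the number of solutions of $3x = 0$ in $G$. In the cyclic group $\ZZ_N$ this count is $\gcd(3,N)$, namely the three elements $0,\, N/3,\, 2N/3$ when $3 \mid N$ and only $0$ when $3 \nmid N$. Substituting $2\cdot 3^{\Psi(\ZZ_N)} = 2\gcd(3,N)$ therefore yields the stated dichotomy $\tfrac{N^2 - 3N + 6}{6}$ when $3 \mid N$ and $\tfrac{N^2 - 3N + 2}{6}$ when $3 \nmid N$.

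For step (ii), I would reconcile these two expressions with the single floor formula by computing $N(N-3) \bmod 6$ in each branch. When $3 \mid N$, writing $N = 3m$ gives $N(N-3) = 9m(m-1)$, which is divisible by $6$ (in fact by $18$) because $m(m-1)$ is even; hence $\floor{N(N-3)/6} = (N^2-3N)/6$ and adding $1$ reproduces $\tfrac{N^2 - 3N + 6}{6}$. When $3 \nmid N$, a short check over the residues $N \equiv 1,2,4,5 \pmod 6$ shows $N(N-3) \equiv 4 \pmod 6$, so $\floor{N(N-3)/6} = (N^2 - 3N - 4)/6$ and adding $1$ reproduces $\tfrac{N^2 - 3N + 2}{6}$.

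I expect no genuine obstacle, since the corollary is a direct specialization of the lemma; the only point that demands any care is the floor-function bookkeeping in step (ii), where one must verify the residue of $N(N-3)$ modulo $6$ separately in the two classes so that the adjustment absorbed into the floor matches the constants $+6$ and $+2$ coming from $2\gcd(3,N)$.
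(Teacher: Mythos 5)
Your proposal is correct and follows the same route as the paper, which offers no separate argument and treats the corollary as an immediate specialization of Lemma~\ref{Lemma:MainGroupSolutionCount} to $k=1$, $n_1=N$; your step (ii) simply makes explicit the floor-function bookkeeping the paper leaves to the reader. One small point in your favor: you read $3^{\Psi(G)}$ as the number of solutions of $3x=0$ in $G$ (i.e.\ $\gcd(3,N)$ for $\ZZ_N$), which is what the lemma's proof actually establishes, whereas the literal definition $\Psi(G)=k-j$ in the lemma's statement would give $\Psi(\ZZ_N)=0$ even when $3\mid N$ and hence the wrong constant $+2$; the definition should be $k-j+1$, an off-by-one in the paper that your reading silently repairs.
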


\begin{theorem} \label{Theorem:qodd}
		Assume $ \FF_q $ to be a finite field of odd characteristic, with $ q =p^n $. 
		\begin{enumerate}
			\item Let $ q \equiv 3 \mod 4 $. There exist  point-line arrangements $ (N, \floor {\frac{N (N-3)}{6}} + 1 )$, with $ N =q+1 $, in the projective plane over the finite field $ \FF_q$ with group models $ \ZZ_{q+1} $ and $ \ZZ_{\frac{q+1}{2}}  \, \oplus \, \ZZ_2 $. 
			\item Let $ n $ be odd and $ q \not \equiv 3 \mod 4 $, or let $ n $ be even and $ p \not \equiv 3 \mod 4 $.  There exits a point-line arrangement $ (N, \floor {\frac{N (N-3)}{6}} + 1 )$, with $ N =q+1 $, in the projective plane over the finite field $ \FF_q$ with the group model $ \ZZ_{q+1} $.
				\end{enumerate}	
		In all the cases above
		\begin{equation}
		\Or {N} \geq \floor {\frac{N (N-3)}{6}} + 1,
		\end{equation}
		with $ N=q+1 $.
			 
\end{theorem}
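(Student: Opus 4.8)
The plan is to turn each configuration into a purely group-theoretic count. Since a line meets the cubic $E$ in exactly three points counted with multiplicity, and three points of $E(\FF_q)$ are collinear exactly when their sum is $O$, a $3$-rich line is the same datum as an unordered triple $\{x,y,z\}$ of \emph{distinct} points of $E(\FF_q) \subset \mathbb{P}^2(\FF_q)$ with $x+y+z=O$. First I would pin this down as a bijection. Closure of the group law shows the third intersection of any secant through two $\FF_q$-rational points is again $\FF_q$-rational, so every such triple is realized by a line; B\'ezout forbids a line and a cubic from sharing four distinct points, so no line supports two different triples. The non-distinct solutions of $x+y+z=O$ correspond exactly to tangent lines ($x=y\neq z$) and inflectional tangents ($x=y=z$), which meet $E$ in fewer than three distinct points and are therefore not $3$-rich; insisting on distinct triples discards precisely these. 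Hence the number of $3$-rich lines in the group model on $E(\FF_q)$ equals the number of distinct unordered solutions of $x+y+z=O$, which is what Corollaries~\ref{Cor:n1n2} and~\ref{Cor:N} compute.

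For part (1) I would invoke the two supersingular curves of Lemma~\ref{Lemma:qoddsupersingular}. The curve $y^2=x^3+x$ gives $E(\FF_q)\cong\ZZ_{q+1}$, and Corollary~\ref{Cor:N} directly yields $\floor{\frac{N(N-3)}{6}}+1$ lines with $N=q+1$. The curve $y^2=x^3-x$ gives $E(\FF_q)\cong\ZZ_{(q+1)/2}\oplus\ZZ_2$; before applying Corollary~\ref{Cor:n1n2} I must check its invariant-factor hypothesis $n_1\mid n_2$, which holds because $q\equiv 3\bmod 4$ forces $4\mid q+1$, so $2\mid(q+1)/2$ and we may take $n_1=2$, $n_2=(q+1)/2$ with $N=n_1n_2=q+1$. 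Since $3\nmid n_1=2$, the configuration lies outside the regime where the count exceeds the Green--Tao value, so Corollary~\ref{Cor:n1n2} again returns exactly $\floor{\frac{N(N-3)}{6}}+1$; thus both group models realize the stated arrangement.

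For part (2) I would instead produce a supersingular curve of trace zero, so that $\#E(\FF_q)=q+1$, and then verify cyclicity. Existence is guaranteed by Schoof's theorem (Theorem~\ref{Theorem:Schoof}): for $n$ odd the value $t=0$ always occurs, while for $n$ even it occurs precisely when $p\equiv 1\bmod 4$, which in odd characteristic is exactly the hypothesis $p\not\equiv 3\bmod 4$. Cyclicity then follows from Theorem~\ref{Theorem:SchoofSupersingular}(3): a trace-zero curve over $\FF_q$ with $q\not\equiv 3\bmod 4$ satisfies $E(\FF_q)\cong\ZZ_{q+1}$. In the first subcase $q\not\equiv 3\bmod 4$ is assumed; in the second, $p\equiv 1\bmod 4$ gives $q=p^n\equiv 1\bmod 4$, so again $q\not\equiv 3\bmod 4$. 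Feeding this cyclic model into Corollary~\ref{Cor:N} gives $\floor{\frac{N(N-3)}{6}}+1$ lines. Since in every case we have exhibited a concrete configuration attaining this count, we conclude $\Or{N}\geq\floor{\frac{N(N-3)}{6}}+1$ with $N=q+1$.

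The enumerative heart of the argument already sits in the corollaries, and the curves together with their group structures are handed to us by Lemma~\ref{Lemma:qoddsupersingular} and the theorems of Schoof, so most of the work is bookkeeping. The step I expect to require the most care is matching each hypothesis to the right structural input: checking $n_1\mid n_2$ before using Corollary~\ref{Cor:n1n2}, and confirming that the trace-zero curve of part (2) is simultaneously guaranteed to \emph{exist} (Schoof) and to be \emph{cyclic} (Theorem~\ref{Theorem:SchoofSupersingular}) under the stated congruences. The geometric observation that tangency degeneracies are exactly the non-distinct triples is what legitimizes the clean reduction to these counting corollaries.
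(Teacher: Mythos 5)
Your proposal is correct and follows essentially the same route as the paper: part (1) via the two supersingular curves of Lemma~\ref{Lemma:qoddsupersingular} with the counts from Corollaries~\ref{Cor:n1n2} and~\ref{Cor:N}, and part (2) via Theorem~\ref{Theorem:Schoof} (existence of a trace-zero curve) together with Theorem~\ref{Theorem:SchoofSupersingular} (cyclicity when $q \not\equiv 3 \bmod 4$). Your extra care in justifying the bijection between $3$-rich lines and distinct unordered solutions of $x+y+z=O$, and in checking $n_1 \mid n_2$, only makes explicit what the paper leaves implicit.
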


\begin{proof}
	\leavevmode
	\begin{enumerate}
		\item Let  $ q \equiv 3 \mod 4$. Let  $ E_1 \colon y^2 = x^3 - x $ and $ E_2 \colon y^2 = x^3 + x $ be elliptic curves defined over $ \FF_q $. Then from Lemma~\ref{Lemma:qoddsupersingular} we have $ E_1(\FF_q) \cong \ZZ_{\frac{q+1}{2}}  \, \oplus \, \ZZ_2  $, and  $ E_2(\FF_q) \cong \ZZ_{q+1} $.
       
        First, we consider $ q+1  $ points on $ E_1(\FF_q) $. Let $ m = \frac{q+1}{2} $.  It is given that $ q \equiv 3 \mod 4 $, so $ 2 |m $. Since, $ E_1(\FF_q) \cong \ZZ_2  \, \oplus \, \ZZ_{\frac{q+1}{2}} $,
        there exist points $ P $ and $ Q $ in $ E_1(\FF_q) $ such that $ 2 P = O $ and $ m Q = O $, where $O$ is the point at infinity. The three distinct points $ A_i = \alpha_i P + \beta_i Q  $, with
        $ \alpha_i \in \ZZ_2, \beta_i \in \ZZ_m $, for $ i=1,2,3$, are collinear if and only if 
        \begin{align}
        & \sum_{i=1}^{3}	\alpha_i P + \beta_i Q = O \iff   \alpha_1 + \alpha_2 + \alpha_3 = 0 \mod 2 , \nonumber \\ &\text{  and  } \beta_1 + \beta_2 + \beta_3 = 0 \mod m. \label{eq:noncyclic}
        \end{align}
Therefore, finding the number of $ 3 $-rich lines is equivalent to finding the number of distinct solutions (up to ordering) to the equation $ x + y + z =0  $ in $ \ZZ_2 \, \oplus \, \ZZ_m $. Now the result follows from 
        Corollary~\ref{Cor:n1n2}, with $ n_1 =2 $ and $ n_2 = m = \frac{q+1}{2}$.
        
        Next we consider $ q+1  $ points on $ E_2(\FF_q)  \cong \, \ZZ_{q+1}$. Let $ P $ be a generator of the underlying group of $ E_2(\FF_q) $. Let points $ P_i \in E_2(\FF_q) $, for $ i=1,2,3 $. Then $ P_1 = xP $, $ P_2 = yP $ and $ P_3 =z P $ for some $ x,y,z \in \ZZ_{q+1} $. Clearly, $ P_1 $, $ P_2 $ and $ P_3 $ are collinear if and only if 
        \begin{equation}
        P_1 + P_2 +P_3 = O \iff xP + yP + z P = O \iff x + y +z = 0 \mod q+1.
        \end{equation}
        Therefore, the number of $ 3 $-rich lines is equal to the number of solutions of the equation $ x+y+z =0  $ in $ \ZZ_{q+1} $. Now, the result follows from Corollary~\ref{Cor:N},
        with $ N =q+1 $.
         \item Let $ q =p^n $ with $ n $ odd. Then from Theorem~\ref{Theorem:Schoof} there exist an elliptic curve such that $ \#E(\FF_q) = q+1 $, i.e., $ t =0 $ in Theorem~\ref{Theorem:Schoof}. Then Theorem~\ref{Theorem:SchoofSupersingular} implies that $ F_{\FF_q} \cong \ZZ_{q+1} $, as $ q \not \equiv 3 \mod 4 $. On the other hand, if $ q =p^n  $ with $ n $ even, then from Theorem~\ref{Theorem:Schoof} it follows that there exists an elliptic curve such that $ \#E(\FF_q) = q+1 $ if and only if $ p \equiv 1 \mod 4 $. Once again, Theorem~\ref{Theorem:SchoofSupersingular} implies that $ F_{\FF_q} \cong \ZZ_{q+1} $. Now the proof proceeds similar to the proof of part (1).
	\end{enumerate}

\end{proof}

\begin{theorem} \label{Theorem:qeven}
	Assume $ \FF_q $ to be a finite field with $ q = 2^n $.
	\begin{enumerate}
		\item Let $ q = 2^n $ with $ n $ odd.  There exits a point-line arrangement $ (N, \floor {(N) (N-3)/6} + 1) $  in the projective plane over the finite field $ \FF_q$ with the group model $ \ZZ_{N} $ for each of the following values of $ N $  
		\begin{enumerate}
	\item	$ N = q+1 $, 
	\item   $ N = q+1 + \sqrt{2q} $,
	\item $ N =  q+1 - \sqrt{2q} $ .
		\end{enumerate}

	\item Let $ q = 2^n $ with $ n $ even. There exits a point-line arrangement $ (N, \floor {(N) (N-3)/6} + 1) $  in the projective plane over the finite field $ \FF_q$ with the group models $ \ZZ_{N} $ for $ N =q+1 $ and for $ N = q +1 \pm \sqrt{q} $.
			
For both the cases (1) and (2),
\begin{equation}
\Or {N} \geq \floor{\frac{N (N-3)}{6}} + 1,
\end{equation}
and it agrees with the Green-Tao bound. 
	\end{enumerate}

\end{theorem}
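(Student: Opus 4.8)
The plan is to follow exactly the cyclic-group argument used in part (2) of Theorem~\ref{Theorem:qodd}, now feeding it the group data recorded in Tables~\ref{Table:odd} and~\ref{Table:even} for supersingular curves in characteristic $2$. The only real work is to check, case by case, that each of the target orders $N$ listed in the statement is actually realized as a \emph{cyclic} group $E(\FF_q)\cong\ZZ_N$ by some curve appearing in those tables; once that is in hand, the line count is immediate.

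First I would record that the quantities $\sqrt{q}$ and $\sqrt{2q}$ occurring in the statement are genuine integers: for $q=2^n$ with $n$ even we have $\sqrt{q}=2^{n/2}$, while for $n$ odd we have $\sqrt{2q}=2^{(n+1)/2}$, so the claimed orders $q+1$, $q+1\pm\sqrt{q}$, and $q+1\pm\sqrt{2q}$ are positive integers. Next, for part (1) ($n$ odd), the curve $y^2+y=x^3$ gives a cyclic group of order $q+1$, and the pair of curves $y^2+y=x^3+x$ and $y^2+y=x^3+x+1$ together realize both orders $q+1\pm\sqrt{2q}$ cyclically: whatever the residue of $n$ modulo $8$, one of the two curves has order $q+1+\sqrt{2q}$ and the other has order $q+1-\sqrt{2q}$, both cyclic. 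For part (2) ($n$ even), the curve $y^2+y=x^3+\delta x$ yields order $q+1$ cyclically, and the curves $y^2+\gamma y=x^3$ and $y^2+\gamma y=x^3+\alpha$ (or their $\gamma^2$ analogues) realize both orders $q+1\pm\sqrt{q}$ cyclically, again regardless of whether $n\equiv 0$ or $2\pmod 4$. I would deliberately avoid the two non-cyclic rows of Table~\ref{Table:even}, namely $y^2+y=x^3$ and $y^2+y=x^3+\omega$, since those give $\ZZ_m\oplus\ZZ_m$ rather than a cyclic group.

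With a cyclic model $E(\FF_q)\cong\ZZ_N$ fixed, I would choose a generator $P$ and write the $N$ points as $xP$ with $x\in\ZZ_N$. Since on an elliptic curve three points are collinear precisely when they sum to $O$, the points $xP,yP,zP$ lie on a common line if and only if $x+y+z\equiv 0\pmod N$; because no four points of a cubic are collinear, every such line through three distinct points is automatically $3$-rich, while solutions with a repeated coordinate correspond to tangent or inflectional lines meeting the curve in fewer than three distinct points and are therefore excluded. Hence the number of $3$-rich lines equals the number of distinct solutions, up to ordering, of $x+y+z=0$ in $\ZZ_N$, which by Corollary~\ref{Cor:N} is exactly $\floor{N(N-3)/6}+1$. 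This exhibits the desired $(N,\floor{N(N-3)/6}+1)$-arrangement and yields the asserted bound $\Or{N}\geq\floor{N(N-3)/6}+1$, matching the Green-Tao bound.

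The only point requiring care --- and the place where an error could creep in --- is the residue-class bookkeeping in Tables~\ref{Table:odd} and~\ref{Table:even}: one must select the correct representative curve for the given congruence class of $n$ so that the \emph{cyclic} group of the prescribed order is obtained, and one must steer clear of the supersingular curves whose point groups have the form $\ZZ_m\oplus\ZZ_m$, for which Corollary~\ref{Cor:N} does not apply (there Corollary~\ref{Cor:mm} would instead produce an excess of $2$ over the Green-Tao bound when $3\mid N$). Everything else is the same mechanical reduction to the cyclic count already carried out in Theorem~\ref{Theorem:qodd}.
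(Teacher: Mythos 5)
Your proposal is correct and follows essentially the same route as the paper: select the cyclic supersingular curves from Tables~\ref{Table:odd} and~\ref{Table:even} realizing each target order $N$, then reduce the count of $3$-rich lines to the number of unordered solutions of $x+y+z=0$ in $\ZZ_N$ via Corollary~\ref{Cor:N}, exactly as in Theorem~\ref{Theorem:qodd}. You simply spell out the residue-class bookkeeping and the exclusion of the $\ZZ_m\oplus\ZZ_m$ rows, which the paper's one-sentence proof leaves implicit.
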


\begin{proof}
	From Table~\ref{Table:odd} one can pick the elliptic curves $ y^2 + y = x^3 $, $ y^2 + y = x^3 + x $ and $ y^2 + y = x^3 + x +1 $  to get the desired group models. Then the number of $ 3 $-rich lines can be calculated using Corollaries to Lemma~\ref{Lemma:MainGroupSolutionCount}, similar to the calculation carried out in  Theorem~\ref{Theorem:qodd}. This proves part (1) of the theorem. Similarly, part(2) of the theorem follows from Table~\ref{Table:even}.
\end{proof}
\newpage
\begin{theorem} \label{Theorem:qevenGreenTaoExceed}
	 Let $ N = q+ 1 + 2 \sqrt{q} $ with either $ q $ odd or $ q = 2^n $ with $ n $ even. There exist point-line arrangements $ (N, t) $  in the projective plane over the finite field $ \FF_q$ with 
		\begin{enumerate}
			\item the group model $ \ZZ_{\sqrt{q}-1} \, \oplus  \ZZ_{\sqrt{q}-1} $ for $ N = q +1 - 2 \sqrt{q} $, and 
			\begin{align*}
			t = 
			\begin{cases}
			\frac{1}{6} \left( q^2 - 4 q^{\frac{3}{2}} +  3q + 2 \sqrt{q} +16 \right), \quad &\text{if }\, \sqrt{q} \equiv 1 \mod 3, \\
			\frac{1}{6} \left( q^2 - 4 q^{\frac{3}{2}} +  3q + 2 \sqrt{q} \right), \quad   &\text{if }\, \sqrt{q} \not \equiv 1 \mod 3.
			\end{cases}
			\end{align*}
			
			\item the group model $ \ZZ_{\sqrt{q}+1} \, \oplus  \ZZ_{\sqrt{q}+1} $ for $ N = q +1 + 2 \sqrt{q} $, and 
			\begin{align*}
			t = 
			\begin{cases}
			\frac{1}{6} \left( q^2 + 4 q^{\frac{3}{2}} +  3q - 2 \sqrt{q} +16 \right), \quad &\text{if }\, \sqrt{q} \equiv 1 \mod 3, \\
			\frac{1}{6} \left( q^2 + 4 q^{\frac{3}{2}} +  3q - 2 \sqrt{q} \right), \quad   &\text{if }\, \sqrt{q} \not \equiv 1 \mod 3,
			\end{cases}
			\end{align*}
		\end{enumerate}
 with 
		
			\begin{align}
		\Or {N} \geq
			\begin{cases}
				 \floor {\frac{N (N-3)}{6}} + 3 \qquad  &\text{if }\,  3 | N ,  \, \,\text{ (Green-Tao bound is exceeded.)}\\
				 \floor {\frac{N (N-3)}{6}} + 1 \qquad  &\text{if }\,  3 \nmid N.
			\end{cases}
			\end{align}

\end{theorem}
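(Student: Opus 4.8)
The plan is to produce, for each of the two orders $N=(\sqrt q \mp 1)^2$, an explicit supersingular curve whose group of rational points has the prescribed shape $\ZZ_m \oplus \ZZ_m$, and then to convert the collinearity count into the solution count of $x+y+z=0$ that Corollary~\ref{Cor:mm} already evaluates. First I would settle existence. Since $\sqrt q$ is an integer exactly when $q$ is a perfect square, both hypotheses ($q$ odd, or $q=2^n$ with $n$ even) place us in the ``$n$ even'' regime of Theorem~\ref{Theorem:Schoof}, where $t^2=4q$ is admissible. Hence there exist elliptic curves $E/\FF_q$ with trace $t=2\sqrt q$ and $t=-2\sqrt q$, that is with $\#E(\FF_q)=q+1-2\sqrt q=(\sqrt q-1)^2$ and $\#E(\FF_q)=q+1+2\sqrt q=(\sqrt q+1)^2$ respectively. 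Since $t^2=4q$, Corollary~\ref{Cor:Schoof} shows these curves are supersingular, so Theorem~\ref{Theorem:SchoofSupersingular}(2) forces their group structure to be $\ZZ_{\sqrt q-1}\oplus\ZZ_{\sqrt q-1}$ and $\ZZ_{\sqrt q+1}\oplus\ZZ_{\sqrt q+1}$, giving cases (1) and (2). In even characteristic one may instead read off the curves $y^2+y=x^3$ and $y^2+y=x^3+\omega$ from Table~\ref{Table:even}, which realize these two group structures directly.

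Next I would reduce the geometric problem to a counting problem exactly as in the proof of Theorem~\ref{Theorem:qodd}: writing $G=\ZZ_m\oplus\ZZ_m$ for the relevant $m\in\{\sqrt q-1,\sqrt q+1\}$ and fixing generators, the group law on $E$ makes three points collinear precisely when their coordinate vectors sum to $0$ in $G$, so the number of $3$-rich lines equals the number of distinct (unordered) solutions of $x+y+z=0$ in $G$. I would then invoke Corollary~\ref{Cor:mm} verbatim: substituting $m=\sqrt q-1$ and $m=\sqrt q+1$ into $\tfrac{m^4-3m^2+18}{6}$ and $\tfrac{m^4-3m^2+2}{6}$ and expanding yields the four explicit values of $t$ claimed. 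Because each displayed count comes from an actual configuration, it is a lower bound for $\Or{N}$, and reading the right-hand column of Corollary~\ref{Cor:mm} gives the final inequality $\Or{N}\ge\floor{N(N-3)/6}+3$ when $3\mid N$ and $\Or{N}\ge\floor{N(N-3)/6}+1$ otherwise.

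There is no serious analytic obstacle here; the content is entirely structural, and the one step demanding care is the bookkeeping of the congruence condition. Since $N=m^2$, the extra ``$+18$'' branch of Corollary~\ref{Cor:mm} (equivalently the surplus of $2$ over the Green-Tao bound) is triggered exactly when $3\mid m$, and I would translate this into the appropriate residue condition on $\sqrt q$ separately for $m=\sqrt q-1$ and for $m=\sqrt q+1$ before expanding, so that each formula branch is matched to the correct class of $\sqrt q \bmod 3$. The conceptually interesting output of the theorem, namely that $3\mid N$ forces two \emph{additional} rich lines beyond the Green-Tao count, is then an immediate consequence of this ``$+18$'' branch, the two surplus lines corresponding to the two nonzero diagonal solutions $\alpha_i=\beta_i=\gamma_i\in\{n_i/3,\,2n_i/3\}$ that arise when $3\mid m$.
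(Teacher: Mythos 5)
Your proposal is correct and follows essentially the route the paper intends (its own proof is a one-line deferral to Tables~\ref{Table:odd} and~\ref{Table:even}), while actually supplying the omitted details --- in particular the existence and group-structure argument for odd $q$ via Theorem~\ref{Theorem:Schoof}, Corollary~\ref{Cor:Schoof} and Theorem~\ref{Theorem:SchoofSupersingular}(2), which the paper's citation of the characteristic-$2$ tables alone does not cover, together with the reduction to Corollary~\ref{Cor:mm}. One point your careful branch-matching would surface: for the group model $\ZZ_{\sqrt{q}+1}\oplus\ZZ_{\sqrt{q}+1}$ the ``$+16$'' branch is triggered by $3\mid(\sqrt{q}+1)$, i.e.\ $\sqrt{q}\equiv 2 \mod 3$, so the condition ``$\sqrt{q}\equiv 1 \mod 3$'' printed in case~(2) of the statement appears to be a typo (case~(1), where $3\mid(\sqrt{q}-1)$ indeed means $\sqrt{q}\equiv 1 \mod 3$, is stated correctly).
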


\begin{proof}
	The proof is very similar to the previous Theorem and follows from Table~\ref{Table:odd} and Table~\ref{Table:even}. We omit the details. 
\end{proof}

\begin{theorem} \label{Theorem:Classification}
	Assume $ |t| \leq 2 \sqrt{p}$. Let $ N = p + 1 + t $ such that $ N = n_1 n_2 $, with $ n_1 | n_2 $.  Then there exists an elliptic curve $ E $ over the finite field $ \FF_p $, such that $ E(\FF_p) = N $. Then there exists a point-line arrangement $ (N, t) $  in the projective plane over the finite field $ \FF_p$ with the group model $ \ZZ_{n_1} \, \oplus  \ZZ_{n_2} $, with  
	\begin{align}
	\Or {N} \geq
	\begin{cases}
	\floor {\frac{N (N-3)}{6}} + 3 \qquad  &\text{if }\,  3 | n_1 , \, \,\text{ (Green-Tao bound is exceeded.)}\\
	\floor {\frac{N (N-3)}{6}} + 1 \qquad  &\text{if }\,  3 \nmid n_1.  \\
		\end{cases}
	\end{align}
\end{theorem}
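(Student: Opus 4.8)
The plan is to realize the prescribed abelian group $\ZZ_{n_1}\oplus\ZZ_{n_2}$ as the group of $\FF_p$-points of an elliptic curve, and then to count $3$-rich lines by translating collinearity into the additive equation $x+y+z=0$ and applying Corollary~\ref{Cor:n1n2}. Since $q=p$ is prime, the exponent $n$ in $q=p^n$ equals $1$ and is odd; writing the Frobenius trace as $t'$ (so $\#E(\FF_p)=p+1-t'$, hence $t'=-t$), the hypothesis $|t|\le 2\sqrt p$ becomes $(t')^2\le 4p$.

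The first step is to verify that $N$ actually occurs as the order of some elliptic curve over $\FF_p$, which is precisely the hypothesis required before Theorem~\ref{Theorem:Ruck} can be invoked. I would read this off from Theorem~\ref{Theorem:Schoof} with $n=1$: Case~(1) yields a curve of order $N$ whenever $t'\not\equiv 0\pmod p$, while Case~(2), available because $n$ is odd, supplies the remaining possibility $t'=0$. For $p\ge 5$ the inequality $|t'|\le 2\sqrt p<p$ forces any $t'\equiv 0\pmod p$ to be $t'=0$, so exactly one of these two cases always applies and $N$ is a legitimate order.

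The decisive step is to pin down the \emph{group structure} rather than merely the order, and for this I would apply Theorem~\ref{Theorem:Ruck}. With $q=p$ we have $q-1=p-1$, and since $n=1$ is odd we never land in the exceptional even-$n$, $t^2=4q$ branch of Theorem~\ref{Theorem:Schoof}; consequently Rück's criterion collapses to the single divisibility $n_1\mid p-1$ (with the $p^e$-factor trivial when $p\nmid N$; the only way $p\mid N$ for $p\ge 5$ is $N=p$ itself, which is prime and forces the harmless cyclic case $n_1=1$). Granting $n_1\mid p-1$, Theorem~\ref{Theorem:Ruck} then produces $E/\FF_p$ with $E(\FF_p)\cong\ZZ_{n_1}\oplus\ZZ_{n_2}$. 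I expect this divisibility condition to be the real crux: it is the only genuinely non-automatic input, and it is tacitly what makes $\ZZ_{n_1}\oplus\ZZ_{n_2}$ a realizable ``group model.''

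Finally I would assemble the arrangement. Choosing generators $P,Q$ of the two cyclic factors, each rational point is uniquely $\alpha P+\beta Q$, and the group law makes three distinct points collinear exactly when their sum is $O$, i.e.\ when the matching triple solves $x+y+z=0$ in $\ZZ_{n_1}\oplus\ZZ_{n_2}$. Corollary~\ref{Cor:n1n2} evaluates the number of such unordered triples as $\floor{\frac{N(N-3)}{6}}+3$ when $3\mid n_1$ and as $\floor{\frac{N(N-3)}{6}}+1$ when $3\nmid n_1$ (the two sub-cases with $3\nmid n_1$ yielding the same value). As every such triple is a distinct $3$-rich line and $\Or{N}$ is at least the number of $3$-rich lines in any exhibited configuration, the claimed lower bound for $\Or{N}$ follows, the Green-Tao bound being exceeded precisely when $3\mid n_1$.
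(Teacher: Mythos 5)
Your proposal follows essentially the same route as the paper: realize $N$ as the order of some curve over $\FF_p$, invoke Theorem~\ref{Theorem:Ruck} for the group structure, and count unordered solutions of $x+y+z=0$ via Corollary~\ref{Cor:n1n2}. The only divergences are cosmetic or to your credit: you derive the existence of the order from Theorem~\ref{Theorem:Schoof} where the paper cites Deuring, and, more importantly, you correctly isolate the divisibility $n_1 \mid p-1$ as the genuine hypothesis needed for R\"{u}ck's theorem to produce the group model $\ZZ_{n_1}\oplus\ZZ_{n_2}$ --- a condition the paper's statement and proof pass over in silence, even though without it the claimed group model can fail to exist (e.g.\ $N=9=3\cdot 3$ over $\FF_{11}$, where $3\nmid 10$ forces any curve of order $9$ to be cyclic).
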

\begin{proof}
There exists an elliptic curve with $ \#E(\FF_p) = N $ for each integer $ N $ in the interval $ [p+1 - 2 \sqrt{p}, p+1 + \sqrt{p}] $  is well-known (for example, it follows from the work of Deuring \cite{MR5125}.) The statement about the group-structure of $ E(\FF_p) $ follows form Theorem~\ref{Theorem:Ruck}. It is clear from the proof of Theorem~\ref{Theorem:qodd} that the number of $ 3 $-rich lines depends upon the underlying group-structure of $ E(\FF_p) $, and more precisely, the number of distinct solutions to the equation $ x+ y+z =0  $ 
in the underlying group of $ E(\FF_p) $. Now, the result follows from the Corollaries of Lemma~\ref{Lemma:MainGroupSolutionCount}.  

\end{proof}

\begin{Remark}
	Some examples of orchard group models are shown in Table~\ref{Table:examples}.
\end{Remark}


\begin{table}[htbp]
	\centering
	\begin{tabular}{l*{5}{S[table-format = 2.1]}}
		\toprule
		\textbf{Curve}                                    &	
		\textbf{q} &
		{\specialcellbold{Group   }}                 &
		{\specialcellbold{N}}         & 
		{\specialcellbold{t}}     & 
		 {\specialcellbold{ Green-Tao \\ bound}}  \\
		\midrule
$ y^2 + y = x^3 + x$       &  8   & \text{$ \ZZ_5  $} & 5  & 2 & 2\\
$ y^2 + y = x^3 + x$       &  128   & \text{$ \ZZ_{145}  $} & 145  & 3432 & 3432\\
$y^2 = x^3 + 1 $       & 5 &  \text{$ \ZZ_{6}$}    &   6  & 4  &  4 \\
$y^2 = x^3 + 1 $       & 49 &  \text{$ \ZZ_{48}$}    &   48  & 361 &  361 \\
$y^2 = x^3 + x $       & 7 &  \text{$ \ZZ_{8}$}    &   8  & 7 &  7 \\
$y^2 = x^3 + x $       & 13 &  \text{$ \ZZ_{2} \oplus \ZZ_{10}$}    &   20  & 57 &  57 \\
$y^2 + y = x^3 - x^2 - 10x - 20 $       & 19 &  \text{$ \ZZ_{20} $}    &   20  & 57 &  57 \\
$ y^2 + y = x^3 $       &  4   & \text{$ \ZZ_3 \oplus \ZZ_3 $} & 9  & 12  & 10\\
$ y^2 + y = x^3 $       & 16  &  \text{$ \ZZ_9 \oplus \ZZ_9 $}   &   81  & 1056  &  1054 \\
$ y^2 + y = x^3 $       & 256 &  \text{$ \ZZ_{15} \oplus \ZZ_{15} $}   &   225  & 8328  &  8326 \\
$y^2 = x^3 + 1 $       & 25 &  \text{$ \ZZ_6 \oplus \ZZ_6 $}    &   36  & 201 &  199 \\
$y^2 = x^3 + 1 $       & 7 &  \text{$ \ZZ_2 \oplus \ZZ_6 $}    &   12  & 19 &  19 \\
				\bottomrule
	\end{tabular}
	\caption{Examples.} 	\label{Table:examples}
\end{table}

\section{Realization on a real projective plane} \label{Sect:Examples}
Let us consider the example of the elliptic curve  $E: \, y^2 = x^3 + 5x^2 + 4x  $ over $ \FF_7 $. It can be checked that
$ \#E(\FF_7) = 8 $ and $ E(\FF_7) $ contains the following $ 8 $ points:
\begin{align*}
A &= (0, 0), \quad B = (2, 1),\quad C = (2, 6), \quad D = (3, 0),\\
E &= (5, 2), \quad  F = (5,5), \quad G= (6,0), \quad H = (0,1,0)  \text{ (the point at infinity).}
\end{align*}

The following seven $ 3 $-rich lines could be formed using the points in $ E(\FF_7) $ (see Fig.~\ref{Fig:8points7linesnonst}): 
\begin{align*}
\l_1: \quad   \text{containing} \quad  \{A, D, G\}, &\qquad \l_2: \quad   \text{containing} \quad  \{O, B, C\},\\
\l_3: \quad   \text{containing} \quad  \{O, E, F\}, & \qquad \l_4: \quad   \text{containing} \quad  \{B, D, F\},\\
\l_5: \quad   \text{containing} \quad  \{B, E, G\}, & \qquad \l_6: \quad   \text{containing} \quad  \{C, D, E\}, \\
\l_7: \quad   \text{containing} \quad  \{C, F, G\}. &
\end{align*}
It can be verified that $ E(\FF_7) \cong \ZZ_4 \times \ZZ_2 $ and the generators of the group are $ C =(2,6) $ and $ D = (3,0) $. Thus, we have a model showing that 
$ \Or{8} \geq 7 $. The orchard configuration with (8 points, 7 lines) in a projective plane over $ \FF_7 $ is shown in Fig.~\ref{Fig:8points7linesnonst}. 
\begin{figure}[ht]
	\includegraphics[]{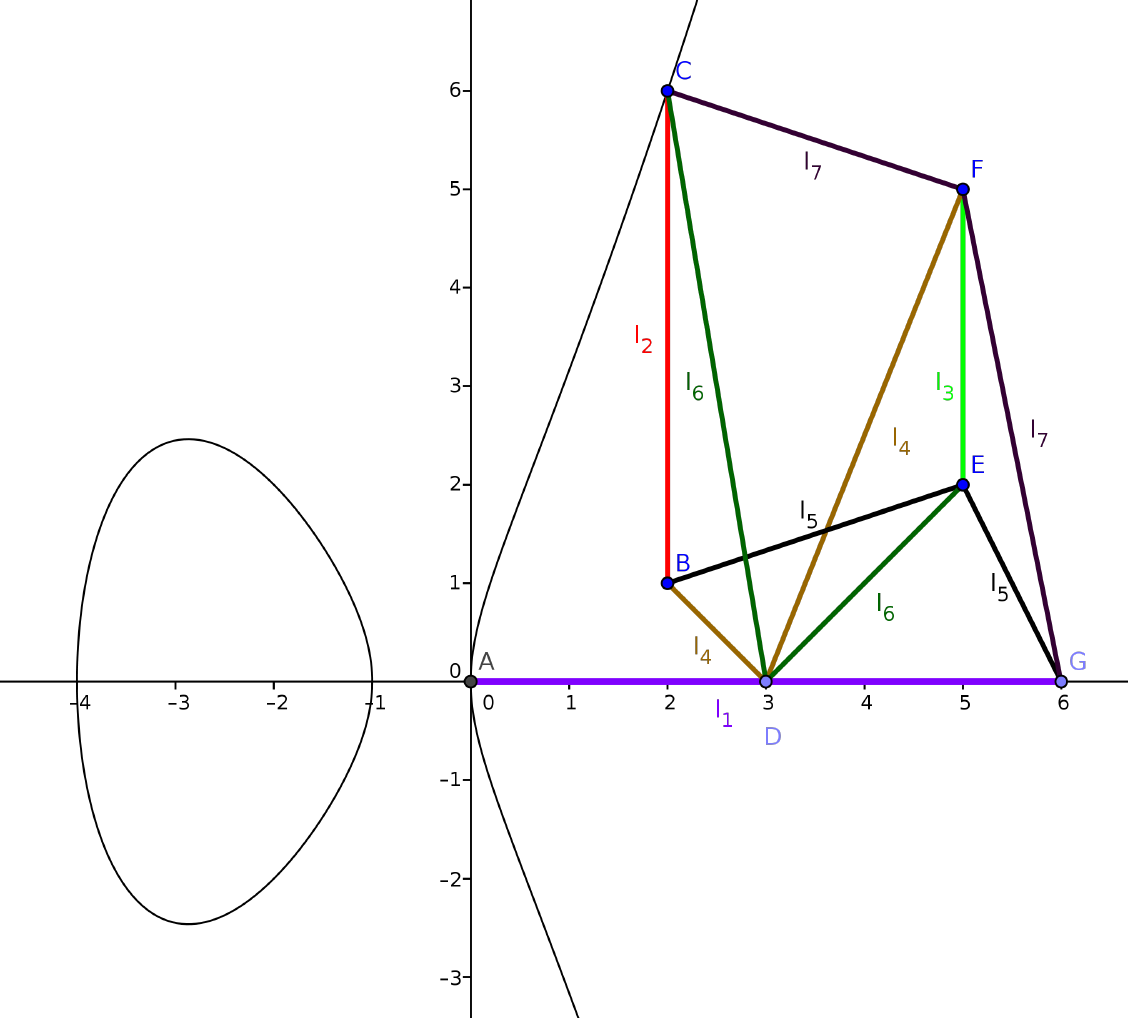}
	\caption{Orchard configuration with ($ 8 $ points, $ 7 $ lines) using elliptic curve given by a minimal Weierstrass equation $ y^2 =  x^3 + 5x^2 + 4x $  over $ \FF_7.$ The point at infinity $ (0,1,0) $ is not shown.}\label{Fig:8points7linesnonst}
\end{figure}
Some lines, for example, $ l_4 $ going through points $B $, $ D $ and $ F $, `visually' does not appear to be a straight line. Is it possible to `straightened' these lines by picking some other equivalent points in the projective plane over $ \FF_7 $, i.e., can this point-line configuration be embedded into a real projective plane to give an orchard model? The answer in this case is in affirmative, as shown in Fig.~\ref{Fig:8points7linesSt}. 

\begin{figure}[]
	\includegraphics[]{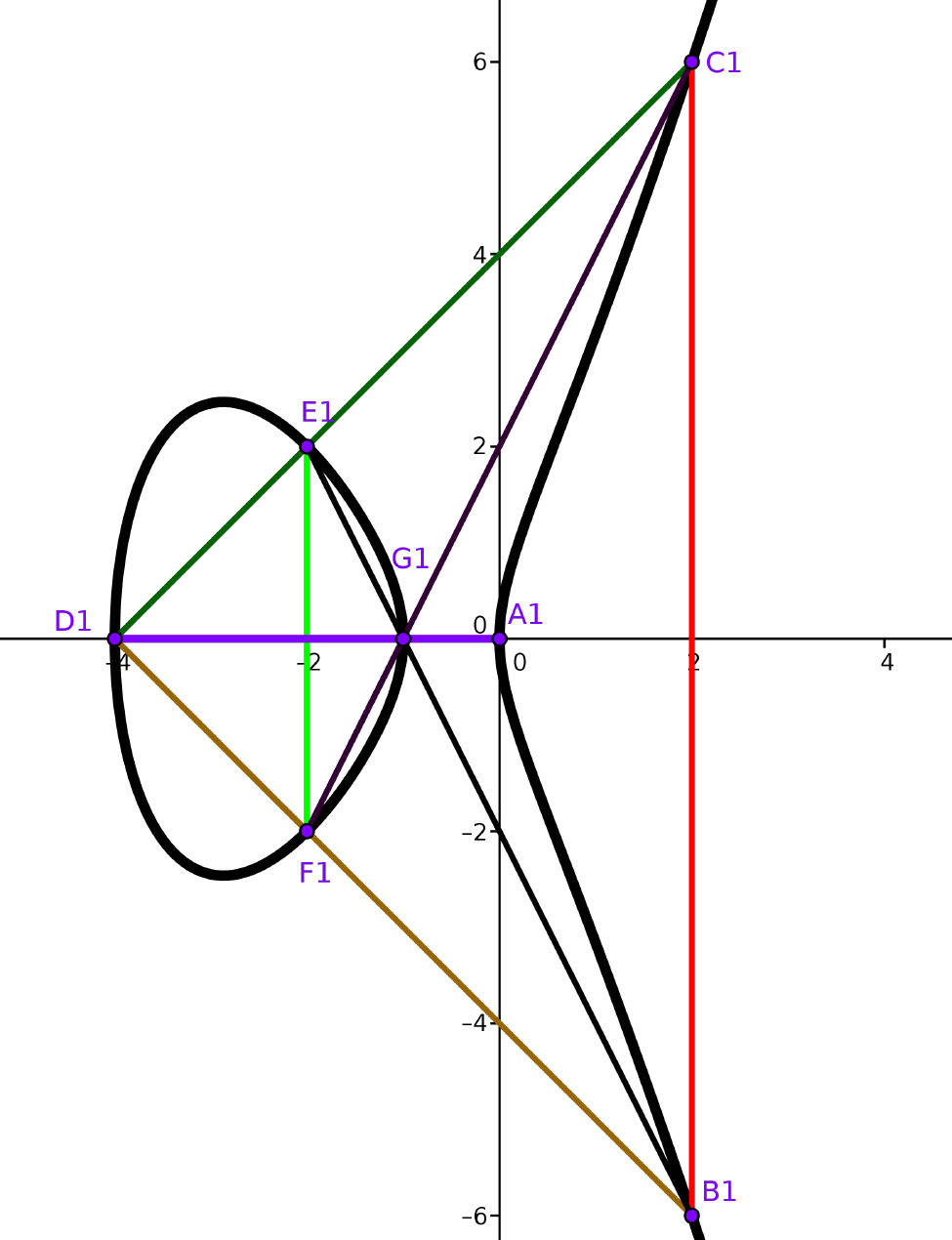}
	\caption{Orchard configuration with ($ 8 $ points, $ 7 $ lines) using elliptic curve given by a minimal Weierstrass equation $ y^2 =  x^3 + 5x^2 + 4x $  on a projective plane over $ \RR $. The point at infinity $ (0,1,0) $ is not shown.}\label{Fig:8points7linesSt}
\end{figure}

Note that 
\begin{align*}
A_1 &=A  = (0,0), \quad B_1 = (2,-6) \equiv B = (2,1) \mod  7, \quad C_1 = C = (2,6) \\
D_1 &= (-4,0) \equiv D = (3,0) \mod 7, \quad E_1 = (-2,2) \equiv (5,2) \mod 7 \\
F_1 & = (-2,-2) \equiv F = (5,5,) \mod 7, \quad G_1  = (-1,0) \equiv G = (6,0) \mod 7.
\end{align*}

In Fig.~\ref{Fig:8points7linesSt}  seven points $ A_1 $, $ B_1 $, $ C_1 $, $ D_1 $, $ E_1 $, $ F_1 $ and $ G_1 $ are arranged so that they yield total  $ 7 $ `straight' $ 3 $-rich lines.
In fact, the configuration shown in Fig.~\ref{Fig:8points7linesSt} gives an orchard model  with ($ 8 $ points, $ 7 $ lines) for $ y^2 = x^3 + 5x^2 + 4x  $ over any finite $ \FF_p $, for all prime $ p \geq 7 $. Moreover, it is easy to check that the above points on $ y^2 = x^3 + 5x^2 + 4x  $  also give an orchard model on a real projective plane. This is nice, as one obtains a classical orchard model ($ 8 $ points, $ 7 $ lines) with integral points (i.e., points with integral coordinates) and the group structure $ \ZZ_4 \times \ZZ_2 $ over real projective plane.

A natural question arises: is it always possible to obtain an orchard group model on a real projective plane corresponding to an orchard model on a projective plane over a finite field?
Of course, for orchard group model with ($n$  points, $ t $ lines), such that $ t =  \floor {\frac{n (n-3)}{6}} + 3$, i.e., where the Green-Tao bound is exceeded, the answer to this question would be no for sufficiently large $ n $, otherwise it will contradict Theorem~\ref{Theorem:GreenTao}. Indeed, the answer is no for even a small $ n = 9$, as evident from the following example.

Consider the elliptic Curve defined by $ y^2 = x^3 + 2 $ over $ \FF_7 $.
It can be checked that
$ \#E(\FF_7) = 9 $ and $ E(\FF_7) $ contains the following $ 9 $ points:
\begin{align*}
A &= (0, 3), \quad B = (0, 4),\quad C = (3, 1), \quad D = (3, 6), \quad E = (5,1), \\
F &= (5,6), \quad G = (6,1), \quad H= (6,6), \quad O = (0,1,0)  \text{ (the point at infinity).}
\end{align*}
The following twelve $ 3 $-rich lines could be formed using the points in $ E(\FF_7) $ 
\begin{align*}
\l_1: \quad   \text{containing} \quad  \{O, A, B\}, &\qquad \l_2: \quad   \text{containing} \quad  \{O, C, D\},\\
\l_3: \quad   \text{containing} \quad  \{O, E, F\}, & \qquad \l_4: \quad   \text{containing} \quad  \{O, G, H\},\\
\l_5: \quad   \text{containing} \quad  \{A, C, H\}, & \qquad \l_6: \quad   \text{containing} \quad  \{A, D, E\}, \\
\l_7: \quad   \text{containing} \quad  \{A, F, G\}. & \qquad \l_8: \quad   \text{containing} \quad  \{B, C, F\}, \\
\l_9: \quad   \text{containing} \quad  \{B, D, G\}, &\qquad \l_{10}: \quad   \text{containing} \quad  \{B, E, H\},\\
\l_{11}: \quad   \text{containing} \quad  \{C, E, G\}, & \qquad \l_{12}: \quad   \text{containing} \quad  \{D, F, H\}.
\end{align*}

It can be verified that $ E(\FF_7) \cong \ZZ_3 \times \ZZ_3 $ and the generators of the group are $ C =(6,6) $ and $ D = (6,0) $. Thus, we have a model showing that 
$ \Or{9} \geq 12 $. The orchard configuration with ($ 9 $ points, $ 12 $ lines) in a projective plane over $ \FF_7 $. But, this configuration can not be realized over the real 
projective plane as it is known that an optimal configuration of $ 9 $ points can yield at most $ 10 $  $ 3 $-rich lines (see \cite{OEISA003035}).

\bibliographystyle{plain}

\end{document}